\newtheorem{theorem}{Theorem}[section]
\newtheorem*{theorem*}{Theorem}
\newtheorem*{definition*}{Definition}
\newtheorem{lemma}[theorem]{Lemma}
\newtheorem{corollary}[theorem]{Corollary}
\newtheorem{proposition}[theorem]{Proposition}
\newtheorem{remark}[theorem]{Remark}
\newtheorem{definition}[theorem]{Definition}
\newtheorem{conjecture}[theorem]{Conjecture}
\newcommand{\nc}{\newcommand}
\nc{\cH}{{\mathcal H}}
\nc{\cA}{{\mathcal A}}
\nc{\cG}{{\mathcal G}}
\nc{\cC}{{\mathcal C}}
\nc{\cD}{{\mathcal D}}
\nc{\cO}{{\mathcal O}}
\nc{\cI}{{\mathcal I}}
\nc{\cB}{{\mathcal B}}
\nc{\cY}{{\mathcal Y}}
\nc{\cK}{{\mathcal K}} 
\nc{\cX}{{\mathcal X}}
\nc{\cS}{{\mathcal S}}
\nc{\cE}{{\mathcal E}}
\nc{\cF}{{\mathcal F}}
\nc{\cZ}{{\mathcal Z}}
\nc{\cQ}{{\mathcal Q}}
\nc{\cN}{{\mathcal N}}
\nc{\cP}{{\mathcal P}}
\nc{\cL}{{\mathcal L}}
\nc{\cM}{{\mathcal M}}
\nc{\cT}{{\mathcal T}}
\nc{\cW}{{\mathcal W}}
\nc{\cU}{{\mathcal U}}
\nc{\cJ}{{\mathcal J}}
\nc{\cV}{{\mathcal V}}
\nc{\cR}{{\mathcal R}}
\nc{\bH}{{\mathbb H}}
\nc{\bA}{{\mathbb A}}
\nc{\bG}{{\mathbb G}}
\nc{\bC}{{\mathbb C}}
\nc{\bO}{{\mathbb O}}
\nc{\bI}{{\mathbb I}}
\nc{\bB}{{\mathbb B}}
\nc{\bY}{{\mathbb Y}}
\nc{\bK}{{\mathbb K}} 
\nc{\bX}{{\mathbb X}}
\nc{\bS}{{\mathbb S}}
\nc{\bE}{{\mathbb E}}
\nc{\bF}{{\mathbb F}}
\nc{\bZ}{{\mathbb Z}}
\nc{\bQ}{{\mathbb Q}}
\nc{\bN}{{\mathbb N}}
\nc{\bP}{{\mathbb P}}
\nc{\bL}{{\mathbb L}}
\nc{\bM}{{\mathbb M}}
\nc{\bT}{{\mathbb T}}
\nc{\bW}{{\mathbb W}}
\nc{\bU}{{\mathbb U}}
\nc{\bD}{{\mathbb D}}
\nc{\bJ}{{\mathbb J}}
\nc{\bV}{{\mathbb V}}
\nc{\bR}{{\mathbb R}}
\nc{\OO}{\mathcal{O}}
\nc{\PP}{\mathbb{P}}
\DeclareMathOperator{\Hom}{Hom}
\DeclareMathOperator{\Pic}{Pic}
\DeclareMathOperator{\Sing}{Sing}
\DeclareMathOperator{\Sp}{Sp}
\DeclareMathOperator{\Ab}{Ab}
\DeclareMathOperator{\MDM}{\overline{\cM}_g^{\mbox{{\fontsize{5}{12}\selectfont DM}}}}
\DeclareMathOperator{\ASat}{\overline{\cA}_g^{\mbox{{\fontsize{5}{12}\selectfont Sat}}}}
\DeclareMathOperator{\Mgo}{\cM_g^{o}}
\newcommand{\Sat}{\overline{\tau}}
\DeclareMathOperator{\Rgbar}{\overline{\cR}_g}
\DeclareMathOperator{\Sgbar}{\overline{\cS}_g}
\DeclareMathOperator{\Sgpbar}{\overline{\cS}_g^+}
\DeclareMathOperator{\debar}{\overline{\partial}}
\nc{\fA}{{\mathfrak{A}}}
\nc{\fB}{{\mathfrak{B}}}
\nc{\fC}{{\mathfrak{C}}}
\nc{\fD}{{\mathfrak{D}}}
\nc{\fE}{{\mathfrak{E}}}
\nc{\fF}{{\mathfrak{F}}}
\nc{\p}{\partial}
\nc{\ph}{\hat{\partial}}
\nc{\war}{{\color{red} CHECK}}
\author[F.F. Favale]{Filippo Francesco Favale}
\address[F.F. Favale]{Dipartimento di Matematica e Applicazioni,
	Universit\`a degli Studi di Milano-Bicocca,
	Via Roberto Cozzi, 55,
	I-20125 Milano, Italy}
\email{filippo.favale@unimib.it}
\author[J. C. Naranjo]{Juan Carlos Naranjo$^{1,2}$}
\address[J. C. Naranjo]{
1.
Departament de Matem\`atiques i Inform\`atica, 
    Universitat de Barcelona, Gran Via 585, 
    08007 Barcelona, Spain \newline 
2. Centre de Recerca Matem\`atica, Edifici C, Campus Bellaterra, 08193 Bellaterra, Spain
    }
\email{jcnaranjo@ub.edu}
\author[G.P. Pirola]{Gian Pietro Pirola}
\address[G.P. Pirola]{Dipartimento di Matematica,
	Universit\`a degli Studi di Pavia,
	Via Ferrata, 5
	I-27100 Pavia, Italy}
\email{gianpietro.pirola@unipv.it}
\author[S. Torelli]{Sara Torelli}
\address[S. Torelli]{
1. Institut f\"ur Algebraische Geometrie, 
    Leibniz Universit\"at Hannover, 
    Welfengarten 1, 30167 Hannover, Germany}
\email{torelli@math.uni-hannover.de}
\title{Holomorphic $1$-forms on some coverings of the moduli space of curves}
\dedicatory{Dedicated to Enrico Arbarello}
\begin{document}

\begin{abstract}
In this paper we consider unramified coverings of the moduli space $\mathcal M_g$ of smooth projective complex curves of genus $g$. Under some hypothesis on the branch locus of the finite extended map to the Deligne-Mumford compactification, we prove the vanishing of the vector space of holomorphic $1$-forms on the preimage of the smooth locus of $\mathcal M_g$. This applies to several moduli spaces, as the moduli space of curves with $2$-level structures, of spin curves and of Prym curves. In particular, we obtain that there are no non-trivial holomorphic $1$-forms on the smooth open set of the Prym locus.   

\end{abstract}

\thanks{\textit{2010 Mathematics Subject Classification}: Primary:  14H15\\
\textit{Keywords}: Moduli space, Holomorphic forms, Coverings \\
F.F Favale and G.P. Pirola are members of GNSAGA (INdAM) and are partially supported by PRIN project {\it Moduli spaces and Lie theory 2017} and by {\it MIUR: Dipartimenti di Eccellenza Program (2018-2022) - Dept. of Math. Univ. of Pavia}. J.C. Naranjo is partially supported by the Spanish MINECO research project {\it PID2019-104047GB-I00}. S. Torelli is supported by Alexander von Humboldt Foundation.}

\maketitle
\section*{Introduction}

In its beautiful paper \cite{Mum} Mumford shows that the first cohomology group of $\cM_g$, the moduli space of smooth projective complex curves, is torsion. This was perhaps the first important result on the topology of $\cM_g$ after the theorems of Hurwitz, Luroth, and Clebsh, which proved the connectedness of the moduli space. Mumford's result has been made precise and greatly generalized by Harer \cite{Har}, who also computed the second cohomology group of $\cM_g.$ An important consequence of Mumford computation is that the Albanese variety of the Deligne-Mumford compactification $\MDM$ is trivial and that there are no non-trivial {\em closed} 1-holomorphic form on it. In \cite{FPT} the authors were able to remove the closed condition. 
To be more precise it was shown that for $g\geq 5 $ (and a simple inspection gives that $g\geq 4$ is sufficient), $H^0(\Mgo,\Omega^1_{\Mgo})=0$ where we denote by $\Mgo$ the smooth locus of $\cM_g$, and by $\Omega^1_{\cM_g}$ its cotangent bundle. The proof was given by reducing the problem to some test surfaces via Satake compactification, together with a completion argument which set up the possible essential singularities of the forms.
\vskip 0,4cm

The work of  \cite{FPT}  was motivated by the theory of projective structures on curves. In fact, the vanishing of $H^0(\Mgo,\Omega^1_{\Mgo})$ provides a one-to-one correspondence
between $(1,1)$ $\debar$-closed forms with fixed cohomology class and the $\cC^{\infty}$ sections of the affine space of the projective structures on $\Mgo$ (see \cite{BCFP}). Moreover, this result allowed some simplification (see \cite{BFPT}) on the well-known fact that
the Weil-Petersson form on $\cM_g$ is associated to the projective structure obtained by uniformization Theorem.
\vskip 0,4cm

Besides, we believe that this type of conditions are really interesting on their own. They deal with  important
algebro-analytic-geometric properties of quasi-projectve varieties. From Hodge theory, one knows that  all the holomorphic $(p,0)$-forms  on a complete smooth complex algebraic variety are closed. In contrast, on the affine
varieties there are plenty of holomorphic non-closed forms. 
This leads us to give the following definition.

\begin{definition*} 
We say that an algebraic variety is  $p$-complete if all the holomorphic $(p,0)$-forms are closed: $H^0(X,\Omega^p_X)=H^0(X,\Omega^p_X)_{closed},$  i.e.  there are no non-zero $(p+1,0)$ holomorphic exact  forms on $X$.
\end{definition*}

With this language we have that for $g\geq 5$, $\Mgo$ is $1$-complete.
It is a {\em concavity result} in the sense of Andreotti and Grauert \cite{AG}  and one can conjecture that $\Mgo$ is $p$-complete or else that $H^0(\Mgo,\Omega_{\Mgo}^p)=0$ for  $g>>0.$ 
We believe that these problems should deserve to be investigated more thoroughly.
\vskip 0,4cm

In this paper we consider the problem of $(1,0)$-forms on \'etale finite covering $\pi:M\to \cM_g$ for $g\geq 5$. 
More precisely setting $M^o=\pi^{-1}(\Mgo)$, we ask whenever it holds that $H^0(M^o,\Omega^1_{M^o})=0.$

As in the case of $\cM_g$ we divide the problem in two by asking when:

\begin{enumerate}
 \item  $H^0(M^o,\Omega^1_{M^o})_{closed}=0.$
\item  $M^o$ is $1$-complete.
 \end{enumerate}

We note that $M^o$ is  $0$-complete  since $M$ is covered by complete curves; then the vanishing of the first statement above follows from the vanishing  of $ H^1(M^o,\bC).$
A famous conjecture of Ivanov (see \cite{Iv 15} or \cite{Kir}) predicts $H^1(M^o,\bC)=0$ for all finite \'etale covering of $\cM_g,$ and
the problem has been studied by many authors (see for example, \cite{Bog}, \cite{McC}, \cite{Hai}, \cite{HL}, \cite{Put}\dots). 
We find, in particular, very useful  the Boggi-Putman result (\cite{Bog}, \cite{Put}) which implies that $H^1(M^o,\bC)=0$ if the covering $\pi:M^o\to \Mgo$ is trivial around the Dehn twists associated to separating curves. To be more precise, let 
$\pi_\ast:\pi_1(M^o) \to \pi_1(\Mgo)$ be the map between the fundamental groups. One remarks that for $g>3$,
$\pi_1(\Mgo)$ is isomorphic to the mapping class group $\Gamma_g$. This follows because $\dim (\cM_g\setminus \Mgo)\leq 2.$ Let $J_g\subset \Gamma_g$ be Johnson group, i.e. the subgroup generated by the Dehn twists associated to separating curves.
As a consequence of the result of Boggi and Putman we have that $H^1(M^o,\bC)=0$ if $\pi_\ast(\pi_1(M^o))\supseteq J_g$. 
\medskip

To get a geometric condition, let $\overline{\pi}:\overline{M}\to \MDM$ be a finite covering, set $M^o=\overline{\pi}^{-1}(\Mgo)$ and assume that $\pi^o=\overline{\pi}|_{M^o}$ is \'etale. If $\overline{\pi}$ is ramified only on $\Delta_0$ we obtain
$(\pi^o)_\ast(\pi_1(M^o))\supseteq J_g$ and therefore we have the following:

\begin{theorem}[Boggi-Putman]
If $\overline{\pi}: \overline{M}\to  \MDM$ ramified only on $\Delta_0$ and $g>3$, then $H^1(M^o,\bC)=0.$\end{theorem}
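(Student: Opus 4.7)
The plan is to verify directly the hypothesis of the Boggi--Putman criterion, namely that $(\pi^o)_\ast(\pi_1(M^o)) \supseteq J_g$, and then invoke the cited result. First I would use the identification $\pi_1(\Mgo) \cong \Gamma_g$, valid for $g \geq 4$ since, as already noted in the excerpt, $\cM_g \setminus \Mgo$ has complex codimension at least two. Under this identification, the meridian loops in $\Mgo$ around a generic point of a boundary divisor $\Delta_i \subset \MDM$ correspond to Dehn twists along a simple closed curve on the reference Riemann surface: non-separating when $i = 0$, separating of topological type $(i, g-i)$ when $i \geq 1$. This is the standard dictionary coming from the Deligne--Mumford degeneration picture, in which the vanishing cycle of a one-parameter smoothing of a nodal curve plays the role of the support of the Dehn twist.

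Next, I would exploit the ramification hypothesis. Since $\overline{\pi}$ is ramified only along $\Delta_0$, it is \'etale at a generic point of every $\Delta_i$ with $i \geq 1$. Hence a small meridian in $\Mgo$ around such a generic point lifts to a closed loop in $M^o$, and its image under $(\pi^o)_\ast$ --- which is exactly the original meridian --- lies in $(\pi^o)_\ast(\pi_1(M^o))$. Translating through the dictionary above, this means that every Dehn twist along a separating simple closed curve belongs to $(\pi^o)_\ast(\pi_1(M^o))$. Since $J_g$ is by definition generated by such Dehn twists, the inclusion $J_g \subseteq (\pi^o)_\ast(\pi_1(M^o))$ follows, and the Boggi--Putman result recalled just above the statement immediately yields $H^1(M^o, \bC) = 0$.

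The only genuinely delicate point is the geometric dictionary between meridians around boundary divisors in $\MDM$ and Dehn twists along vanishing cycles; one must work carefully with the orbifold/stack structure of $\MDM$ along the boundary, since the local monodromy statement is cleanest in the stacky picture and requires some bookkeeping when transferred to the coarse space. Once this standard correspondence is accepted, the remainder of the argument amounts to the tautology that an \'etale cover trivializes all local monodromies over its \'etale locus.
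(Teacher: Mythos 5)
Your proposal is correct and follows essentially the same route as the paper, which proves this statement by combining Lemma~\ref{LEM:JOHNSON} (a separating Dehn twist is a meridian around some $\Delta_i$ with $i\neq 0$, where $\overline{\pi}$ is unramified, so it lifts to a loop and hence lies in $G=(\pi^o)_\ast(\pi_1(M^o))$) with Theorem~\ref{THM:PUTBOG} and the identification $H^1(M^o,\bZ)\simeq H^1(G,\bZ)$ of Proposition~\ref{PROP:ETALETORELLI}. The only cosmetic difference is that you flag the orbifold/coarse-space bookkeeping for the meridian--Dehn-twist dictionary explicitly, which the paper leaves implicit.
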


Granted this topological result, next we need a condition that provides  the $1$-completeness of $M^o$. We show that this holds when $\overline{\pi}$ is not ramified on $\Delta_1$, the divisor where the general point represents a nodal curve of arithmetic genus $g$ which is the union of a curve of genus $g-1$ and an elliptic curve.
We show the following:

\begin{theorem}
If $\overline{\pi}: \overline{M}\to  \MDM$ does not ramify on $\Delta_1$ and $g\geq 5$, then $M^o$ is $1$-complete.
\end{theorem}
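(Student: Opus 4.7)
The plan is to show that every $\omega \in H^0(M^o, \Omega^1_{M^o})$ is $d$-closed. Following the philosophy of \cite{FPT}, the approach is to restrict $\omega$ to appropriate $2$-dimensional test surfaces in $\overline{M}$ and invoke Hodge theory on their smooth projective completions. The unramified hypothesis on $\Delta_1$ is the key ingredient making such completions available.

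The first step is to construct a family of test surfaces in $\MDM$ exploiting the elliptic-tail structure of $\Delta_1$. Fix a general pair $(C, p) \in \cM_{g-1, 1}$ and consider the $2$-dimensional family of stable curves parameterized by $(E, t) \in \overline{\cM}_{1, 1} \times \Delta$, where $E$ is an elliptic curve glued at its origin to $(C, p)$ and $t$ is the smoothing parameter of the resulting node. After passing to a smooth projective cover of $\overline{\cM}_{1, 1}$ and compactifying appropriately in the $t$-direction, this yields a smooth projective surface $S$ with a morphism $S \to \MDM$ meeting $\Delta_1$ along a curve $\gamma = \{t = 0\}$ and lying in $\Mgo$ on the open locus $\{t \neq 0,\ E\text{ smooth}\}$. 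The (normalized) preimage $\widetilde{S} = \overline{\pi}^{-1}(S)$ is smooth along the preimage of $\gamma$ thanks to the unramified hypothesis; the remaining singularities, lying over isolated points of $S \cap \Delta_0$, can be resolved to produce a smooth projective surface $\widetilde{S}'$ with a finite morphism to $\overline{M}$. Setting $\widetilde{S}^o = \widetilde{S} \cap M^o$, the pullback $\omega|_{\widetilde{S}^o}$ is holomorphic on an open subset of $\widetilde{S}'$.

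The second and crucial step is to show that $\omega|_{\widetilde{S}^o}$ extends to a holomorphic $1$-form $\tilde{\omega}$ on $\widetilde{S}'$. Across the preimage of $\gamma$ the covering is locally trivial, so $\omega$ admits a Laurent-type expansion in the smoothing coordinate $t$; the possible polar, logarithmic, or essential terms yield residues living on the base curve, and one argues these must vanish using the specific geometry of the elliptic-tail family (in particular the rationality of $\overline{\cM}_{1, 1}$ as a coarse moduli space and the rigid structure of the $1$-parameter smoothing). Once extension is achieved, Hodge theory on the smooth projective surface $\widetilde{S}'$ gives $d\tilde{\omega} = 0$, and therefore $(d\omega)|_{\widetilde{S}^o} = 0$. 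Since such test surfaces can be arranged to pass through any point of $M^o$ by varying $(C, p)$ and the smoothing direction, one concludes $d\omega \equiv 0$ on $M^o$. The main obstacle is precisely this extension step: even with $\widetilde{S}'$ smooth, one must rule out log or essential singularities of $\omega$ along the preimage of $\gamma$, and the hypothesis $g \geq 5$ should enter through the dimensional and general-position arguments needed to make the test surface meet $\Delta_1$ transversally at a generic point and through compatibility with the vanishing of \cite{FPT} on $\cM_g$ itself.
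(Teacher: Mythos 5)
Your overall strategy (restrict to test surfaces, extend the form to a smooth projective completion, use Hodge theory, conclude pointwise vanishing of $d\omega$) is the right philosophy, but the specific family of test surfaces you choose breaks the argument in two places. First, the elliptic--tail surfaces are not flexible enough. To deduce $d\omega(q)=0$ at a point $q\in M^o$ you must kill $d\omega(q)|_\sigma$ for \emph{every} $2$-plane $\sigma\subset T_{M,q}$ (or at least a spanning family of decomposable $2$-vectors), which requires a test surface through $q$ tangent to an arbitrarily prescribed $\sigma$. Your surfaces are parameterized by $(E,t)\in\overline{\cM}_{1,1}\times\Delta$ with $(C,p)$ fixed; the parameter space of such surfaces has dimension $3g-5$, so through a general point of $\cM_g$ there are only finitely many of them, and their tangent planes at $q$ are very special directions (the elliptic-tail and node-smoothing deformations). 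Showing $d\omega$ vanishes on those planes alone does not give $d\omega\equiv 0$. The paper instead takes test surfaces that are strict transforms of \emph{general} linear sections of the Satake compactification, precisely so that one can pass a surface through any $p$ with any prescribed tangent $2$-plane $d_q\pi(\sigma)$.

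Second, the extension step --- which you correctly identify as the main obstacle --- is not actually carried out, and the mechanism that makes it work in the paper is unavailable for your surfaces. The paper's Lemma \ref{LEM:lemma1} rules out poles and essential singularities along the boundary curve using hard numerical input: the boundary components $D_i$ are \emph{contracted} by the Satake morphism, hence $D_i^2<0$, and satisfy $\deg((K_S+2D)|_{D_i})<0$; this yields $H^0(S,\Omega^1_S)=H^0(S,\Omega^1_S(mD))$ for all $m$ and then the liftable/concave machinery of \cite{FPT} handles essential singularities. For your elliptic-tail family the boundary curve $\gamma$ (a cover of $\overline{\cM}_{1,1}$) is \emph{not} contracted by the Torelli--Satake map, since $J(C)\times J(E)$ moves as $E$ varies, so neither the negativity of $\gamma^2$ nor the degree condition is automatic, and an appeal to the rationality of $\overline{\cM}_{1,1}$ and ``rigidity of the smoothing'' does not exclude a logarithmic term $dt/t$. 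There is a further complication you gloss over: your surface meets $\Delta_0$ (at $j=\infty$, where the elliptic tail degenerates to a nodal cubic), which is exactly where $\overline{\pi}$ is allowed to ramify, so the preimage surface acquires genuine ramification and singularities; the paper's test surfaces avoid $\Delta_0$ entirely, so that $\pi^{-1}(S)\to S$ is \'etale and Lemma \ref{LEM:Lemma2b} applies directly via Riemann--Hurwitz. As written, your argument has a genuine gap at both the ``enough tangent planes'' step and the extension step.
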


In order to prove this result, we follow an idea analogous to the one used in \cite{FPT} where we considered, as mentioned above, test surfaces which cover a dense open subset of $\Mgo$. Here, generalizing this to coverings of such surfaces, we obtain
a key technical tool (Lemma \ref{LEM:Lemma2b}).
We recall that these surfaces were obtained by intersecting a suitable embedding of the Satake compactification of $\cM_g$ in some projective space with general linear sections.
One notices that such surfaces intersect the boundary of $\MDM$ only on $\Delta_1$, and the condition of the previous theorem boils from this.

\vskip 0,4cm
Combining these results we  get the following.
\begin{theorem} 
Let $\overline{\pi}: \overline{M}\to  \MDM$ be covering that ramifies only on $\Delta_0$.
If $g\geq 5$ then $H^0(M^o,\Omega^1_{M^o})=0$. 
\end{theorem}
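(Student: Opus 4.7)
The plan is a direct assembly of the two preceding theorems together with the $0$-completeness of $M^o$ already recorded in the introduction. No substantially new geometric input is required; the real work, namely the $1$-completeness of $M^o$ resting on the key Lemma~\ref{LEM:Lemma2b}, has already been carried out in the second theorem, so here I only need to verify that the three pieces fit together to give $\omega = 0$.

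First I would observe that the hypothesis ``$\overline{\pi}$ ramifies only on $\Delta_0$'' automatically implies that $\overline{\pi}$ does not ramify on $\Delta_1$. Hence both preceding theorems apply simultaneously: the Boggi--Putman theorem gives $H^1(M^o,\bC) = 0$, while the second theorem gives that every $\omega \in H^0(M^o,\Omega^1_{M^o})$ is $d$-closed.

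Next I would pass from ``$d$-closed'' to ``holomorphically exact''. For any $\omega$ as above, closedness together with $H^1(M^o,\bC) = 0$ produces a smooth $\bC$-valued function $g$ on $M^o$ with $\omega = dg$; decomposing $dg = \partial g + \bar\partial g$ and comparing types with the $(1,0)$-form $\omega$ forces $\bar\partial g = 0$, so $g$ is in fact holomorphic and $\omega = df$ with $f := g \in H^0(M^o,\OO_{M^o})$.

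To conclude, I would invoke the $0$-completeness of $M^o$: since $\overline{M}$ is projective (being a finite cover of $\MDM$) and the complement of $M^o$ in the smooth locus of $\overline{M}$ has complex codimension at least two, a generic point of $M^o$ lies on a complete curve entirely contained in $M^o$. The restriction of the holomorphic function $f$ to any such curve is constant by compactness, so $df$ vanishes on a dense subset of $M^o$, and hence everywhere by analyticity. Therefore $\omega = df = 0$, yielding $H^0(M^o,\Omega^1_{M^o}) = 0$ as claimed. The only possible difficulty in this assembly is the type-decomposition step of the third paragraph, and it is entirely routine.
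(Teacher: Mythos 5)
Your proposal is correct and follows essentially the same route as the paper: the hypothesis of ramification only on $\Delta_0$ lets you combine the Boggi--Putman vanishing of $H^1(M^o,\bC)$ with the $1$-completeness theorem, and then the paper's Theorem~\ref{THM:MAIN}(c) performs exactly your assembly (closed $+$ $H^1=0$ $\Rightarrow$ exact, then $0$-completeness via the covering of $M^o$ by complete curves $\Rightarrow$ $\omega=0$). Your explicit type-decomposition step showing the primitive is holomorphic is a detail the paper leaves implicit, but it is routine and correct.
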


We apply the above results to the more usual coverings of $\cM_g$: moduli space with $2$-level structure, spin curves and moduli of the roots of canonical bundles.  
Finally, we deduce the $1$-completeness result for the Prym locus.
\vskip 0,4cm

The paper is organized in the following way. In section 1 we prove our $1$-completeness result on surfaces. In section 2 we recall the topological results of the vanishing of the first cohomology group. In section 3 we give our application to special coverings of $\cM_g$ (see Theorems \ref{thm:level}, \ref{thm:prym}, \ref{thm:prym2} and \ref{sgplus}).
\vskip 0,4cm

We would like to thank Gabriele Mondello for his enlightening explanation on the covering of the moduli space.
It is a pleasure to dedicate this work to Enrico Arbarello, one of the most profound connoisseurs of the geometry of the moduli space of curves.


\section{Holomorphic 1-forms on surfaces}
\label{SEC:One}

In this section, we study $1$-holomorphic forms on coverings of smooth projective surfaces in relation to birational morphisms. 
\medskip

We first consider the case of holomorphic forms on smooth projective surfaces endowed with a birational morphism satisfying some additional assumptions. The following lemma is a generalization of an argument used in \cite[Lemma 3.7, Theorem 1.6]{FPT} in the particular case of moduli of curves.

\begin{lemma}
\label{LEM:lemma1} Let $\varphi: S\to \bP^N$ be a birational morphism from a smooth projective surface over $\mathbb{C}$. Let $D=\sum_{i=1}^{k}D_i$ be the divisor contracted by $\varphi$ where $\{D_i\}_{i=1,\dots, k}$ are smooth pairwise disjoint divisors. Let $U=S\setminus D$ be its complement and let $C$ be a generic element of $|\cO_S(1)|$. Assume that 
\begin{equation}
\label{EQ:CondLemma1}
\deg((K_S+2D)|_{D_i})<0\qquad \mbox{ for all }\qquad i=1,\dots,k.
\end{equation} 
Then
\begin{enumerate}[(a)]
    \item $H^0(C,\Omega^1_{S}|_C)=H^0(U,\Omega^1_{S}|_U)=H^0(S,\Omega^1_S)$. In particular, holomorphic $1$-forms on $U$ are closed;
    \item The holomorphic functions on $U$ are constant. In particular, $H^0(U,\Omega^1_U)$ injects in $H^1(U,\mathbb C)$. 
\end{enumerate}
In other words, we have proved that $U$ is $0$-complete and $1$-complete.
\end{lemma}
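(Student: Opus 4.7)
\emph{Part (b).} Because $\varphi$ is a morphism to $\bP^N$, the linear system $|\cO_S(1)|$ is base-point-free, and since $D$ is contracted by $\varphi$ we have $C\cdot D_i=0$ for every $i$; hence a generic $C\in|\cO_S(1)|$ is smooth, irreducible, and disjoint from $D$, i.e.\ a complete smooth curve inside $U$. Any holomorphic function $f$ on $U$ restricts to a constant on each such $C$; since every point of $U$ lies on some such $C$ and any two generic members meet in $C^2>0$ points, $f$ is globally constant. Consequently $H^0(U,\cO_U)=\bC$, there are no non-zero exact holomorphic 1-forms, and (a) then supplies the injection $H^0(U,\Omega^1_U)\hookrightarrow H^1(U,\bC)$.

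\emph{Part (a), strategy.} I will prove the key equality $H^0(U,\Omega^1_S|_U)=H^0(S,\Omega^1_S)$; the remaining identification with $H^0(C,\Omega^1_S|_C)$ follows by combining the injection $H^0(U,\Omega^1_S|_U)\hookrightarrow H^0(C,\Omega^1_S|_C)$ from the covering argument with the restriction sequence $0\to\Omega^1_S(-C)\to\Omega^1_S\to\Omega^1_S|_C\to 0$ and the cohomological vanishing produced below. The proof of the key equality splits into meromorphic extension and pole-killing.

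\emph{Execution.} For meromorphic extension, let $\tilde\varphi:S\to\tilde Y$ denote $\varphi$ followed by the normalization of $\varphi(S)$, so $\tilde Y$ is a normal projective surface and $\tilde\varphi|_U$ is an isomorphism onto $\tilde Y\setminus\{\tilde p_i\}$, where $\tilde p_i=\tilde\varphi(D_i)$ is a finite set. Hartogs for reflexive differentials on a normal surface extends any $\omega\in H^0(U,\Omega^1_U)$ to a reflexive 1-form on $\tilde Y$, and its pullback by $\tilde\varphi$ is a rational 1-form on $S$ coinciding with $\omega$ on $U$ and with poles supported on $D$; hence $\omega\in H^0(\Omega^1_S(ND))$ for some $N\geq 0$. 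To kill the poles, apply
\[
0 \to \Omega^1_S((n-1)D)\to \Omega^1_S(nD)\to \bigoplus_{i=1}^k(\Omega^1_S|_{D_i})\otimes N_{D_i/S}^{\otimes n}\to 0
\]
(valid since the $D_i$ are disjoint and smooth) and the conormal sequence for $D_i\subset S$ twisted by $N_{D_i/S}^{\otimes n}$. Setting $d_i:=\deg N_{D_i/S}=D_i^2<0$ and using adjunction, the hypothesis \eqref{EQ:CondLemma1} becomes $\deg(\Omega^1_{D_i}\otimes N_{D_i/S})<0$. For $n\geq 2$ both flanking line bundles have strictly negative degree and hence $H^0=0$; induction collapses everything to the case $n=1$. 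There, the quotient has $H^0\cong \bC^k$ generated by the conormal inclusions $\cO_{D_i}\hookrightarrow (\Omega^1_S|_{D_i})\otimes N_{D_i/S}$ (locally $dz/z$); a \v{C}ech calculation identifies the connecting map $\delta:\bC^k\to H^1(\Omega^1_S)=H^{1,1}(S)$ with $(c_i)\mapsto\sum c_i[D_i]$. The intersection matrix $(D_i\cdot D_j)=\operatorname{diag}(d_1,\dots,d_k)$ is negative definite, so $[D_1],\dots,[D_k]$ are linearly independent in $H^{1,1}(S)$ and $\delta$ is injective. Consequently $H^0(\Omega^1_S(D))=H^0(\Omega^1_S)$, giving $\omega\in H^0(S,\Omega^1_S)$; as $S$ is compact K\"ahler, such forms are closed.

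\emph{Main obstacle.} The subtle step is the meromorphic extension: an analytic 1-form on $U$ could a priori acquire essential singularities along $D$, and without meromorphy on $S$ no sheaf-theoretic pole bound can be invoked. The birational morphism $\varphi$ is essential here because it exhibits $U$ as the smooth locus of a normal projective surface with isolated singularities, where Hartogs for reflexive sheaves applies. The numerical hypothesis \eqref{EQ:CondLemma1} is then deployed only in the pole-killing step, precisely to force the negativity $\deg(\Omega^1_{D_i}\otimes N_{D_i/S})<0$ needed for the inductive $H^0$-vanishing.
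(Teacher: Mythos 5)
Your treatment of the two substantive claims --- that holomorphic $1$-forms on $U$ extend to $S$ (hence are closed) and that holomorphic functions on $U$ are constant --- is correct, and the pole-killing induction is essentially identical to the paper's: the same twisted restriction sequence, the same conormal sequence reducing everything to the negativity of $\omega_{D_i}(D_i)$ and of $N_{D_i/S}$, and the same identification of the $n=1$ connecting map with $(c_i)\mapsto\sum c_i\,c_1(D_i)$, injective by negative definiteness of the (diagonal) intersection matrix. Where you genuinely diverge is the extension step: the paper outsources the passage from $H^0(S,\Omega^1_S)=H^0(S,\Omega^1_S(mD))$ to $H^0(U,\Omega^1_S|_U)=H^0(S,\Omega^1_S)$ to \cite[Theorems 1.2, 1.6]{FPT} (the ``liftable/concave'' machinery), whereas you obtain meromorphy directly by viewing $U$ as the complement of finitely many points in the normalization $\tilde Y$ of $\varphi(S)$ and invoking the $S_2$-extension property of the reflexive sheaf $\Omega^{[1]}_{\tilde Y}$ across a codimension-$2$ set. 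This is a valid and self-contained alternative (note that $\tilde Y\setminus\{\tilde p_i\}$ is smooth since it is isomorphic to $U$, so the extended section is a rational form regular off $D$), and it makes transparent that the birationality of $\varphi$ is exactly what rules out essential singularities.

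There is, however, a gap on the remaining clause of (a), the equality $H^0(C,\Omega^1_S|_C)=H^0(S,\Omega^1_S)$. Your proposed reduction --- combining ``the injection $H^0(U,\Omega^1_S|_U)\hookrightarrow H^0(C,\Omega^1_S|_C)$'' with the sequence $0\to\Omega^1_S(-C)\to\Omega^1_S\to\Omega^1_S|_C\to 0$ --- only addresses the easy inclusion. The content of the claim is surjectivity of the restriction $H^0(S,\Omega^1_S)\to H^0(C,\Omega^1_S|_C)$, i.e.\ that a section of $\Omega^1_S|_C$ defined only on the curve $C$ propagates to all of $S$; the cokernel of restriction sits inside $H^1(S,\Omega^1_S(-C))$, and none of the vanishings you establish (which concern $H^0$ of sheaves supported on $D$) say anything about that group. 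The paper handles this by citing \cite[Theorem 1.2]{FPT}, whose proof (extension to infinitesimal neighborhoods of $C$ and algebraization, using that $\cO_S(C)$ is big and semiample with exceptional locus $D$ together with the $mD$-stabilization you proved) is not recovered by your remark. Since this clause is not used downstream in the paper, the omission does not affect the applications, but as a proof of the lemma as stated it is incomplete at this point.
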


\begin{proof}
In order to prove $(a)$, we first show that $H^0(S,\Omega^1_{S})= H^0(S,\Omega^1_S(mD))$, for any $m\geq 1$, and then that this implies $H^0(S,\Omega^1_{S}|_C)=H^0(U,\Omega^1_{S}|_U)=H^0(S,\Omega^1_S)$.

The proof goes by induction on $m$. Let $m=1$. Consider the short exact sequence 
\begin{equation}\label{eq1}0\to \Omega^1_S\to \Omega^1_S(D)\to \Omega^1_S(D)_{|D}\to 0\end{equation} 
defined by the inclusion $D\subset S$ twisted by $\Omega_S^1$. It's enough to show that the coboundary morphism $\partial : H^0(D,\Omega^1_S(D)|_{D})\to H^1(S,\Omega^1_S)$ on the long exact sequence in cohomology is injective. To prove this, consider the sequence
\begin{equation}\label{eq2}0\to \mathcal{O}_D\to \Omega^1_{S}|_D(D)\to \omega_D(D)\to 0,\end{equation} 
defined as the cotangent sequence twisted by $D$. By assumption, $D$ is a disjoint union of smooth divisors with $\omega_{D_i}(D)=\mathcal{O}_S(K_S+2D)|_{D_i}$ of negative degree. Hence $H^0(D_i,\omega_{D_i}(D))=0$ and so $H^0(D,\omega_D(D))=0$. Thus, we conclude from the long exact sequence in cohomology that $H^0(D,\mathcal{O}_D)\xrightarrow{\alpha} H^0(D,\Omega^1_{S|D}(D))$ is an isomorphism. Consider the diagram: 
\begin{small}
$$\xymatrix@!R{
			  {0}    &              {H^0(S,\Omega_S^1) }      &    {  H^0(S,\Omega_S^1(\log D))} &    {H^0(D,\mathcal{O}_D)}      & {H^1(S,\Omega_S^1)}  &\\
			{0}  & {H^0(S,\Omega_S^1)}  & {H^0(S,\Omega_S^1( D))}  & {H^0(D,\Omega^1_S(D)|_{D})}  & {H^1(S,\Omega_S^1)} &   
			\ar"1,1";"1,2"\ar"1,2";"1,3"\ar"1,3";"1,4"\ar"1,4";"1,5"^{\partial'}
			\ar@{=}"1,2";"2,2"\ar@{^{(}->}"1,3";"2,3"\ar@{=}"1,4";"2,4"^{\alpha}\ar@{=}"1,5";"2,5"
			\ar"2,1";"2,2"\ar"2,2";"2,3"\ar"2,3";"2,4"\ar"2,4";"2,5"^-{\partial}
			\hole
}$$
\end{small}
induced in cohomology by relating the exact sequences of $\log$ differentials to the sequence introduced above using $\alpha$. The connecting morphism $\partial$ is injective if the connecting morphism $\partial'$ is. As $D=\sum D_i$, we have $H^0(\mathcal{O}_D)= \bigoplus_i H^0(\mathcal{O}_{D_i})$. The morphism $\partial'$ sends the unique generator of each $H^0(\mathcal{O}_{D_i})$ to the Chern class $c_1(D_i)$ of $D_i$. Since $D_i$ are pairwise disjoint and contracted by $\phi$, then $D_i^2<0$. Therefore, $c_1(D_i)$ are non-zero and independent:  given a relation $\sum n_ic_1(D_i)=0$ with $n_i\in \mathbb Z$ then take the product with $c_i(D_i)$  and we obtain $n_i=0$. This gives the
  injectivity of $\partial '$ and concludes the case $m=1$.

\medskip	

We now assume that the statement true for $m-1$ and we prove it for $m$. More precisely we show that $H^0(S,\Omega^1_{S}((m-1)D))=H^0(S,\Omega^1_{S}(mD))$ for any $m> 1$. Consider the short exact sequence 
$$0\to H^0(S,\Omega^1_S((m-1)D))\to H^0(S,\Omega^1_S(mD))\to H^0(D,\Omega^1_S(mD)_{|D})$$
induced in cohomology by Sequence \eqref{eq1} twisted by $\cO_S((m-1)D)$. Using Sequence \eqref{eq2} twisted by $\cO_S((m-1)D)$, since $\mathcal{O}_D((m-1)D)$ and $\omega_D(mD)$ have negative degrees on each component of $D$, we get $H^0(D,\Omega^1_{S}(mD)|_D)=0$ and thus the thesis.

\medskip
We now prove that $H^0(C,\Omega^1_{S}|_C)=H^0(U,\Omega^1_{S}|_U)=H^0(S,\Omega^1_S)$. By \cite[Theorem 1.2]{FPT}, having $H^0(S,\Omega^1_{S})= H^0(S,\Omega_S(mD))$ for any $m\geq 1$, implies $H^0(S,\Omega^1_S)\simeq H^0(C,\Omega^1_{S}|_C)$.  By \cite[Theorem 1.6]{FPT}, we also conclude that $H^0(U,\Omega^1_{S}|_U)=H^0(S,\Omega^1_S)$ (in the language of \cite{FPT}, $\Omega^1_S$ is equivalently $\mathcal{O}_S(C)$-liftable and $\mathcal{O}_S(C)$-concave). This ends the proof of $(a)$.

\medskip
To prove $(b)$, let $f\in H^0(U,\mathcal{O}_U)$ be a holomorphic function. Consider the images $\varphi(S)$ and $\varphi(U)$ of $S$ and $U$ respectively via $\varphi$. Since $\varphi(S)$ is covered by complete curves of genus at least $2$ and $\varphi(U)$ is nothing but $\varphi(S)$ minus a certain finite number of points, then an open dense subset of $\varphi(U)$ is covered by complete curves and so it is an open dense subset of $U$. But now the restriction of $f$ to any such curve must be constant, and so $f$ must be constant as well.
\end{proof}

We now focus on coverings of surfaces satisfying the assumptions of Lemma \ref{LEM:lemma1}.  

\begin{lemma}
\label{LEM:Lemma2b}
Let $S,D,U$ and $\varphi$ be as in Lemma \ref{LEM:lemma1} and consider a finite morphism  $\pi:\hat{S}\to S$ with ramification divisor $R$. Set $\hat{D}=\pi^{-1}(D)$ and $\hat{U}=\pi^{-1}(U)$ and let $\hat{C}$ be a general element in the linear system of a sufficiently high multiple of $\pi^*(\cO_{S}(1))$.
Assume that $R$ is disjoint with $\hat{D}$. Then,
\begin{enumerate}[(a)]
    \item $H^0(\tilde C,\Omega^1_{\hat{S}}|_{\hat{C}})=H^0(\tilde U,\Omega^1_{\hat{S}}|_{\hat{U}})=H^0(\tilde S,\Omega^1_{\hat{S}})$. In particular, holomorphic $1$-forms on $\hat{U}$ are closed;
    \item the holomorphic functions on $\hat{U}$ are constant. In particular,   $H^0(\hat{U},\Omega^1_{\hat U})$ injects in $H^1(\hat U, \mathbb C)$. 
\end{enumerate}
\end{lemma}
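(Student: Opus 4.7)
The plan is to reduce to the argument of Lemma~\ref{LEM:lemma1} applied to the triple $(\hat{S},\hat{D},\pi^{*}\cO_{S}(1))$, using the disjointness hypothesis $R\cap\hat{D}=\emptyset$ as the mechanism that transfers all the numerical inputs from $S$ to $\hat{S}$. Because $\pi$ is not birational, we cannot quote Lemma~\ref{LEM:lemma1} as a black box; instead we rerun its proof verbatim, after checking that the key degree and self-intersection inequalities are preserved under $\pi$.

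First I would verify the geometric setup. Since $R$ and $\hat{D}$ are disjoint, $\pi$ is étale in a Zariski neighbourhood of $\hat{D}$, so scheme-theoretically $\hat{D}=\pi^{*}D$ and $\hat{D}$ decomposes as a pairwise disjoint union of smooth irreducible components $\hat{D}_{j}$, each of them mapping étale finitely onto some $D_{i}$. From the Hurwitz formula $K_{\hat{S}}=\pi^{*}K_{S}+R$ and the disjointness of $R$ from each $\hat{D}_{j}$ we get
\[
(K_{\hat{S}}+2\hat{D})|_{\hat{D}_{j}}=\pi^{*}(K_{S}+2D)|_{\hat{D}_{j}},
\]
which has degree $\deg(\pi|_{\hat{D}_{j}})\cdot\deg\bigl((K_{S}+2D)|_{D_{i}}\bigr)<0$ by hypothesis~\eqref{EQ:CondLemma1}. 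The negativity $\hat{D}_{j}^{2}<0$ follows from the projection formula: distinct components of $\pi^{*}D_{i}$ are disjoint, hence $\hat{D}_{j}^{2}=\hat{D}_{j}\cdot\pi^{*}D_{i}=\deg(\pi|_{\hat{D}_{j}})\cdot D_{i}^{2}<0$.

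With these numerical inputs, I would rerun the inductive argument of Lemma~\ref{LEM:lemma1}(a) essentially verbatim, replacing $D$ by $\hat{D}$ in the short exact sequences~\eqref{eq1} and~\eqref{eq2}. The induction then yields $H^{0}(\hat{S},\Omega^{1}_{\hat{S}})=H^{0}(\hat{S},\Omega^{1}_{\hat{S}}(n\hat{D}))$ for every $n\geq 1$; in the base case $n=1$, the independence of the Chern classes $c_{1}(\hat{D}_{j})\in H^{2}(\hat{S},\bQ)$ needed to force injectivity of the connecting map follows, exactly as in the original proof, from $\hat{D}_{j}\cdot\hat{D}_{j'}=0$ for $j\neq j'$ together with $\hat{D}_{j}^{2}<0$. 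The line bundle $\pi^{*}\cO_{S}(1)$ is nef and big on $\hat{S}$ because it is the pullback by a finite morphism of an ample bundle, so for $m\gg 0$ the linear system $|\pi^{*}\cO_{S}(1)^{\otimes m}|$ is base-point-free with a generic smooth irreducible element $\hat{C}$; \cite[Theorem 1.2]{FPT} together with \cite[Theorem 1.6]{FPT} then deliver the remaining two equalities in~(a), i.e.\ the $\pi^{*}\cO_{S}(1)^{\otimes m}$-liftability and $\pi^{*}\cO_{S}(1)^{\otimes m}$-concavity of $\Omega^{1}_{\hat{S}}$.

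For~(b), the argument of Lemma~\ref{LEM:lemma1}(b) transports directly: the dense family of complete curves covering $\varphi(U)$ pulls back through $\varphi\circ\pi$ to a dense family of complete curves in $\hat{U}$ (the generic member avoids the finitely many contraction points $\varphi(D_{i})$, so its preimage is contained in $\hat{U}$), on each irreducible component of which any $f\in H^{0}(\hat{U},\cO_{\hat{U}})$ must be constant; a standard continuity argument in the parameter then forces $f$ to be globally constant, and the injection $H^{0}(\hat{U},\Omega^{1}_{\hat{U}})\hookrightarrow H^{1}(\hat{U},\bC)$ follows in the usual way. The main obstacle I anticipate is not conceptual but bookkeeping: namely, making sure that the FPT restriction and concavity results apply for the pulled-back polarisation $\pi^{*}\cO_{S}(1)^{\otimes m}$ rather than for an a priori very ample divisor, and that the scheme-theoretic identification $\hat{D}=\pi^{*}D$ and the Hurwitz formula along $\hat{D}$ are used only where $\pi$ is étale, so that the arguments remain valid even if $\hat{S}$ carries singularities away from $\hat{D}$.
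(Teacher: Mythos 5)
Your proposal is correct and follows essentially the same route as the paper: verify via Riemann--Hurwitz and the disjointness $R\cap\hat{D}=\emptyset$ that $\deg\bigl((K_{\hat{S}}+2\hat{D})|_{\hat{D}_j}\bigr)$ is a positive multiple of the corresponding negative degree downstairs, and then run the argument of Lemma~\ref{LEM:lemma1} on $(\hat{S},\hat{D})$. The one point you miss is that you \emph{can} quote Lemma~\ref{LEM:lemma1} as a black box: since $\pi$ is finite and $\cO_S(1)$ is semiample, big and nef, so is $\pi^*\cO_S(1)$, and a sufficiently high multiple defines a birational morphism $\hat{\varphi}:\hat{S}\to\bP^{N'}$ contracting exactly $\hat{D}$ --- this is exactly how the paper proceeds, so your verbatim re-run of the induction (and your direct projection-formula computation of $\hat{D}_j^2<0$, which in the paper's setup follows automatically from $\hat{D}_j$ being $\hat{\varphi}$-contracted) is sound but unnecessary.
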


\begin{proof}
Since $\pi$ is finite and $\cO_{S}(1)$ is semiample, big and nef by assumption, the same holds for $\pi^*(\cO_{S}(1))$. Then, a sufficiently high multiple gives us a birational morphism $\hat{\varphi}$ contracting exactly $\hat{D}$.

\vspace{2mm}

We now prove that $\hat{\varphi}$ satisfies the assumptions of Lemma \ref{LEM:lemma1}. By Riemann-Hurwitz, we have that $K_{\hat{S}}=\pi^*K_{S}+R$ so $K_{\hat{S}}+2\hat{D}=\pi^*(K_S+2D)+R$. On the other hand, since $\hat{D}$ is the sum of smooth disjoint divisors $\hat{D}_i$ (as  $R$ is disjoint with $\hat{D}$ by assumption) we have that  $\deg((K_{\hat{S}}+2\hat{D})|_{\hat{D}_i})$ is given by a positive multiple (i.e. the degree of the \'etale covering $\pi|_{\hat{D}_i}:\hat{D}_i\to D_i$) of $\deg((K_S+2D)|_{D_j})$. Then, for all $i$, we have $\deg((K_{\hat{S}}+2\hat{D})|_{\hat{D}_i})<0$ so we can conclude by applying Lemma \ref{LEM:lemma1}.
\end{proof}


\section{Application to coverings of moduli of curves}
\label{SEC:CoveringOfModuli}

In this section we apply the results of the previous section to suitable coverings of the Deligne-Mumford compactification $\MDM$ of the moduli space $\cM_g$ of smooth curves of genus $g\geq 5$.

\subsection{Holomorphic $1$-forms on covering of the moduli space of curves}

We now recall some notions about these moduli spaces, referring to \cites{HM,ACG2}.

We denote by $\Mgo$ the smooth locus of $\cM_g$, which is the locus of $\cM_g$ of curves with trivial automorphisms group and differs from $\cM_g$ by loci of codimension at least $2$ since $g\geq 5$. Let $\MDM$ be the Deligne-Mumford compactification of $\cM_g$, i.e. the moduli space parametrizing classes of stable curves of arithmetic genus $g$.
The boundary of $\MDM\setminus \cM_g$ is given by the union $$\Delta_0\cup \Delta_1\cup \cdots \cup \Delta_{[g/2]}$$ of divisors of $\MDM$. The general element of $\Delta_0$ is given by the class of a irreducible curve with a single node; the general element of $\Delta_i$ is represented by a curve with two smooth irreducible components of genus $i$ and $g-i$ respectively, meeting transversally at a single point.
\medskip

We recall that there is a morphism $\varphi:\MDM\to \bP^N$, birational on its image, contracting the boundary divisors of $\MDM$. If we denote by $\ASat$ the Satake compactification of $\cA_g$ (see \cite{Sat}), the above map is given by the composition of the natural extension $\MDM\to \ASat$ of the Torelli morphism with the embedding induced by the Hodge class of $\ASat$. The fiber over a general point of $\Delta_i$ has dimension $2$ for $i\neq 1$ and $1$ for $i=1$. Indeed, assume $i>0$, then the image is in the closure of the product of the Jacobian loci $\mathcal J_i\times \mathcal J_{g-i}$ in $\mathcal A_i \times \mathcal A_{g-i}$, which have dimension $3g-6$ for $i>1$ and $3g-5$ for $i=1$. On the other hand, the image of $\Delta_0$ is in the closure of $\mathcal J_{g-1}$ as subvariety of the Satake compactification of $\mathcal A_g$, which has dimension $3g-6$.
\medskip

We introduce the following objects.

\begin{definition}
A complex surface in $\MDM$ is a {\it test surface} if it is the strict transform of a general complete intersection of hyperplanes in a suitable projective immersion of the Satake compactification. Analogously, one defines {\it test curves}.
\end{definition}

Test surfaces have been studied in \cite[Proposition 3.5]{FPT} (under the name $H$-surfaces) and played a central role in the proof of the following

\begin{theorem}\cite[Theorem 3.1]{FPT}
\label{THM:MAINFPT}
The space $H^0(\Mgo,\Omega^1_{\Mgo})$ vanishes for $g\geq 5$.
\end{theorem}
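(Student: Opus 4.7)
The plan is to combine the test-surface technique with the completion argument provided by Lemma \ref{LEM:lemma1}. Fix $\omega\in H^0(\Mgo,\Omega^1_{\Mgo})$; I aim to show that $\omega$ restricts to zero on a generic test surface, and invoke the fact that such surfaces sweep out a dense open subset of $\Mgo$ to conclude $\omega\equiv 0$.

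I would start with the geometry of a generic test surface $S\subset\MDM$, namely the strict transform of a general codimension-$(3g-5)$ linear section $T=L\cap\MSat$ via the Satake embedding $\varphi\colon\MDM\to\MSat\subset\bP^N$. The dimension count recalled above the definition gives $\dim\varphi(\Delta_i)=3g-6$ for $i\neq 1$ and $\dim\varphi(\Delta_1)=3g-5$, so a general $L$ meets $\varphi(\partial\MDM)$ only in finitely many points $p_1,\dots,p_k$ lying in $\varphi(\Delta_1)$. By Bertini, $S$ is smooth, the induced morphism $\varphi|_S\colon S\to T$ is birational, and the contracted locus is a disjoint union $D=\sqcup_j D_j$ of smooth curves, one over each $p_j$. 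The numerical hypothesis \eqref{EQ:CondLemma1} of Lemma \ref{LEM:lemma1} amounts, by adjunction, to
\[
(K_S+2D)\cdot D_j \;=\; K_S\cdot D_j+2D_j^2 \;=\; \deg K_{D_j}+D_j^2 \;<\; 0,
\]
i.e.\ the self-intersection $D_j^2$ must be sufficiently negative relative to the genus of $D_j$. This is the explicit computation carried out in \cite[Prop.~3.5]{FPT}, where the hypothesis $g\geq 5$ enters. Granted it, Lemma \ref{LEM:lemma1}(a) yields $H^0(S^o,\Omega^1_S|_{S^o})=H^0(S,\Omega^1_S)$ for $S^o=S\cap\Mgo$, so that $\omega|_{S^o}$ extends to a holomorphic (hence closed, by projectivity of $S$) $1$-form $\widetilde\omega$ on $S$.

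To kill $\widetilde\omega$ I would invoke $H^0(S,\Omega^1_S)=0$. By Hodge theory on the smooth projective $S$ this reduces to $H^1(S,\bC)=0$; a Lefschetz hyperplane argument applied to the general complete-intersection surface $S$ inside $\MSat$ (with the usual care for the mildly singular ambient, via a resolution or intersection cohomology) relates $H^1(S,\bC)$ to $H^1(\MSat,\bC)$. The boundary $\MSat\setminus\cM_g$ has codimension at least $2$ in $\MSat$ by the same dimension count, so $H^1(\MSat,\bC)\cong H^1(\cM_g,\bC)$, which vanishes by Mumford's theorem. Hence $\widetilde\omega\equiv 0$ and $\omega|_{S^o}=0$; letting $S$ range over the family of test surfaces, $\omega$ vanishes on a dense open subset of $\Mgo$, and therefore everywhere.

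The main obstacle is the explicit numerical verification that on a generic test surface the contracted curves $D_j$ have sufficiently negative self-intersection to satisfy the hypothesis of Lemma \ref{LEM:lemma1}: this is the heart of \cite[Prop.~3.5]{FPT}, and is precisely the completion statement that rules out essential singularities of $\omega$ along $D$. The Lefschetz step, although standard in spirit, also requires attention because $\MSat$ is singular, so one must be careful that Mumford vanishing transports correctly to $H^1(S,\bC)$ through the linear sections.
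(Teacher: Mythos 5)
This theorem is quoted from \cite{FPT} rather than proved in the present paper, but your reconstruction follows exactly the strategy the paper attributes to (and reuses from) that reference: general linear sections of the Satake embedding meeting the boundary only along $\Delta_1$, the completion/concavity argument of Lemma~\ref{LEM:lemma1} to extend the form across the contracted curves, regularity of the test surface (via Mumford's vanishing) to kill it, and variation of the test surface through a point with general tangent plane to conclude. You correctly isolate the two genuinely technical inputs (the negativity $\deg((K_S+2D)|_{D_j})<0$ and the regularity of $S$) as the content of \cite[Prop.~3.5]{FPT}, so the proposal is sound and essentially identical in approach.
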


The importance of these surfaces for the proof of the above theorem relies on the fact that they are smooth compact surfaces intersecting the boundary divisors of $\MDM$ in $\Delta_1$. Instead, test curves are fully contained in $\cM_g$. 
\medskip

We prove now a generalization of the above for coverings.

\begin{theorem}
\label{THM:MAIN}
Let $\pi:M\to\MDM$ be a finite morphism with branch locus supported on $\Delta\setminus \Delta_1$. Assume $g\ge 5$. Let $M^o=\pi^{-1}(\Mgo)$. Then:
\begin{enumerate}[(a)]
\item all holomorphic functions on $M^o$ are constant (i.e. $\Mgo$ is $0$-complete);
\item holomorphic $1$-forms on $M^o$ are closed (i.e. $\Mgo$ is $1$-complete);
\item if $H^1(M^o,\mathbb C)=0$, then we have $H^0(M^o,\Omega^1_{M^o})=0$.
\end{enumerate}
\end{theorem}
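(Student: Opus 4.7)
The plan is to deduce all three statements from Lemma \ref{LEM:Lemma2b} by reducing to test surfaces in $\MDM$. By \cite[Proposition 3.5]{FPT}, the test surfaces form a family sweeping out a dense open subset $V\subseteq\Mgo$: through a general point of $\Mgo$ there passes a smooth projective test surface $S\subset\MDM$ meeting the boundary $\MDM\setminus\cM_g$ only along $D:=S\cap\Delta_1$, and the Satake morphism restricts to a birational morphism $\varphi|_S:S\to\bP^N$ contracting $D$ and satisfying the hypotheses of Lemma \ref{LEM:lemma1}.

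Fix such a test surface $S$ and set $\hat S:=\pi^{-1}(S)$, $\hat\pi:=\pi|_{\hat S}$, $\hat D:=\pi^{-1}(D)$, $\hat U:=\pi^{-1}(U)$ where $U=S\setminus D$. Since the branch locus of $\pi$ is contained in $\Delta\setminus\Delta_1$ and $S$ does not meet this locus, $\hat\pi$ is \'etale; in particular $\hat S$ is a smooth projective surface and its ramification divisor $R=0$ is trivially disjoint from $\hat D$. Lemma \ref{LEM:Lemma2b} then applies: every holomorphic $1$-form on $\hat U$ is the restriction of a holomorphic $1$-form on $\hat S$, which is closed since $\hat S$ is projective K\"ahler, and every holomorphic function on $\hat U$ is constant on each connected component.

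Parts $(a)$ and $(b)$ now follow by a covering argument. For a holomorphic function $f$ (respectively a holomorphic $1$-form $\omega$) on $M^o$, the previous step forces $df|_{\hat U}=0$ (respectively $d\omega|_{\hat U}=0$) for every test surface. As the open sets $\hat U$ cover the dense open subset $\pi^{-1}(V)\subseteq M^o$, these vanishings hold identically on $M^o$, so $f$ is locally constant (proving $(a)$) and $\omega$ is closed (proving $(b)$). For $(c)$, given $\omega\in H^0(M^o,\Omega^1_{M^o})$, part $(b)$ gives that $\omega$ is closed, hence the assumption $H^1(M^o,\mathbb C)=0$ yields $\omega=dh$ for some smooth $\mathbb C$-valued function $h$ on $M^o$. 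Decomposing $d=\partial+\bar\partial$ and using that $\omega$ is of type $(1,0)$ gives $\bar\partial h=0$, so $h$ is holomorphic; part $(a)$ then makes $h$ locally constant, and $\omega=dh=0$.

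The main technical obstacle is making sure that for a generic test surface $S$ the preimage $\hat S$ is smooth and that $\hat\pi$ is genuinely \'etale on all of $\hat S$. Both are immediate consequences of the hypothesis that the branch divisor of $\pi$ avoids $\Delta_1$ together with the fact, inherited from \cite{FPT}, that test surfaces can be chosen to avoid every boundary stratum except $\Delta_1$; once this is in place, Lemma \ref{LEM:Lemma2b} does all the surface-level work and the rest is a straightforward density and Hodge-type argument.
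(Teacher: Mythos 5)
Your overall strategy---reduce to test surfaces meeting the boundary only in $\Delta_1$, pull them back under $\pi$, and invoke Lemma \ref{LEM:Lemma2b}---is exactly the paper's, and parts of your write-up (the \'etaleness of $\pi^{-1}(S)\to S$ over a test surface, and the Hodge-type argument in $(c)$ that an exact holomorphic $(1,0)$-form $\omega=dh$ forces $\bar\partial h=0$, hence $h$ holomorphic and constant by $(a)$) are correct and in the last case even more explicit than the paper. However, the step ``as the open sets $\hat U$ cover the dense open subset $\pi^{-1}(V)\subseteq M^o$, these vanishings hold identically on $M^o$'' is a genuine gap. What Lemma \ref{LEM:Lemma2b} gives you is the vanishing of the \emph{restriction} $d\omega|_{\hat U}$, i.e.\ of $d\omega(q)$ evaluated on the tangent plane $T_q\hat U$; a $2$-form whose restriction to every member of a family of surfaces covering an open set vanishes need not vanish (think of $dz_1\wedge dz_2$ on $\bC^3$ restricted to the fibres of $(z_1,z_2)$). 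To close the argument you must use the extra freedom that through a general point $p$ of $\Mgo$ there is a test surface with a \emph{prescribed generic tangent $2$-plane}: this is how the paper argues, by contradiction, choosing for each $q$ with $d\omega(q)\neq 0$ a $2$-plane $\sigma\subset T_{M,q}$ with $d\omega(q)|_\sigma\neq 0$ and then a test surface through $\pi(q)$ tangent to $d_q\pi(\sigma)$ (this flexibility is part of \cite[Proposition 3.5]{FPT}, since test surfaces are general linear sections of the Satake embedding). The same issue affects your deduction of $(a)$; there the paper avoids the problem altogether by covering $M^o$ with \emph{compact test curves} contained in $\pi^{-1}(\Mgo)$, on which any holomorphic function is constant. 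With the tangent-plane refinement inserted, your proof becomes the paper's proof.
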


\begin{proof}
Notice that $\pi|_{M^o}$ is \'etale and so, in particular, $M^o$ is smooth. Furthermore, recall that $\Mgo$ can be covered by compact curves (see \cite{HM}) which are nothing but our test curves as defined above (see also \cite{FPT}). Therefore, $M^o$ is covered by compact curves too, by considering coverings of test curves. In particular, from this it follows that both $\Mgo$ and $M^o$ have no non-constant holomorphic functions and (a) follows.
\medskip

Now we prove (b).  Let $\omega\in H^0(M^o,\Omega_M^1|_{M^o})$ and assume, by contradiction, that $d\omega\neq 0$. Then, for any $q\in M^o$, there exists a $2$-plane $\sigma$ in $T_{M,q}$ such that $d\omega(q)|_\sigma\neq 0$. Furthermore, we have an isomorphism $d_q\pi:T_{M,q}\to T_{\Mgo,p}$ where $\pi(q)=p$. Therefore, we can find a test surface $S$ passing through $p$ with tangent space $d_q\pi(\sigma)$. By \cite[Proposition 3.5]{FPT}, $S$ is a smooth regular surface that does not meet any divisor $\Delta_i$ for $i\neq 1$ and $E=\Delta_1\cap S$ is a disjoint union of smooth curves of genus $g-1$. Moreover, $\deg((K_S+2E)|_{E})<0$ so $\varphi|_S$ satisfies the hypothesis of Lemma \ref{LEM:lemma1}.
\vspace{2mm}

Since $\pi$ has branch supported on $\Delta\setminus \Delta_1$ we have that $\pi|_{\pi^{-1}(S)}:\pi^{-1}(S)\to S$ is \'etale. Let $\hat{S}$ be the connected component of $\pi^{-1}(S)$ containing $q$. Then, $\pi|_{\hat{S}}:\hat{S}\to S$ is \'etale, $\hat{S}$ is smooth and $T_{\hat{S},q}=\sigma$ by construction. 
\vspace{2mm}

Setting $\hat{E}=\pi|_{\hat{S}}^{-1}(E)$, we have that $(\hat{S}\setminus\hat{E})\subseteq \hat{U}$ so that it makes sense to restrict $d\omega$ to $\hat{S}\setminus\hat{E}$. By assumption $d\omega|_{\hat{S}\setminus\hat{E}}\neq 0$ since
$$d\omega|_{\hat{S}\setminus\hat{E}}(q)=d\omega(q)|_\sigma\neq 0.$$
But this is impossible since, by Lemma \ref{LEM:Lemma2b}, all holomorphic $1$-forms on $\hat{S}\setminus \hat{E}$ are closed. Hence, $\omega$ is closed, as claimed.
\vspace{2mm}



Finally, we prove (c). Let $\omega\in H^0(M^o,\Omega^1_{M^o})$. By $(b)$, $\omega$ is a closed $1$-forms so $[\omega]\in H^1(M^o,\bC)$. Since, by assumption, $H^1(M^o,\bC)=0$ we conclude that $\omega$ is exact. Then, by $(a)$, $\omega = 0 $ as claimed.
\end{proof}

\begin{remark}
Notice that  without the hypothesis in (c) on the cohomology, we only can state that  the possible non-zero $1$-forms on $M^{o}$ are not pull-back of forms on $\Mgo$.
\end{remark}


\subsection{About the first cohomology group of covering of $\Mgo$}

The computation of the first cohomology group of a finite cover of $\cM_g$ is an interesting non-trivial problem that has been intensively studied in literature. For instance, it is a deep result due to Harer (see \cite{Har}) that the cohomology of the moduli of Spin curves is zero. Nonetheless, for other important moduli spaces of curves with additional structures the first cohomology group has not been directly computed. 
\medskip

There are several important results in the direction of computing the first cohomology group of finite coverings. For instance, we refer to the works of Boggi, MacCarthy, Hain, Loojenga, and Putman (see \cite{Bog},\cite{McC},\cite{Hai},\cite{HL},\cite{Put}). In particular, we will use two theorems (Theorems \ref{McC} and \ref{THM:PUTBOG}). Let us first recall some basic facts on the fundamental group of $\cM_g$.


Let $S$ be a compact connected oriented topological surface of genus $g$ without boundary and consider the associated Teichmüller space $\mathcal{T}_g$. Let $\Sigma_g$ be the group of orientation preserving homeomorphisms of $S$ modulo isotopy, i.e. the mapping class group of $S$. We have an action of $\Sigma_g$ on $H^1(S,\bZ)$ which preserves the cup product pairing; this induces a surjective homomorphism of groups 
$$\delta:\Sigma_g\to \Sp_{2g}(\bZ)$$
whose kernel is the {\it Torelli group} $\mathcal J_g$, i.e. the group of homomorphism classes which act trivially on the homology of $S$.
It is well known that $\Sigma_g$ acts on $\mathcal{T}_g$ in a properly discontinuous way and that the quotient can be identified with $\cM_g$.
It is known that, for $g\ge 5$, $\Sigma_g$ is the fundamental group of $\Mgo$. Then,
the Torelli group $\mathcal J_g$ can be seen as a subgroup of $\pi_1(\Mgo)\simeq \Sigma_g$.

The following result is the main theorem in \cite[Theorem 2.1]{McC} and of \cite{HL}:

\begin{theorem}\label{McC}
Let $G$ be a subgroup of $\Sigma_g$ of finite index containing $\mathcal J_g$. Then $H^1(G,\mathbb Z)=0$.
\end{theorem}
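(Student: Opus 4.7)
The plan is to analyse the group extension $1 \to \mathcal{J}_g \to G \to \bar G \to 1$, where $\bar G := G/\mathcal{J}_g$ is a finite-index subgroup of the arithmetic group $\Sp_{2g}(\mathbb Z)$ (obtained via the surjection $\delta$), by means of the low-degree terms of the Hochschild--Serre spectral sequence. The five-term exact sequence in group cohomology with trivial integer coefficients reads
$$
0 \longrightarrow H^1(\bar G,\mathbb Z) \longrightarrow H^1(G,\mathbb Z) \longrightarrow H^1(\mathcal{J}_g,\mathbb Z)^{\bar G} \longrightarrow H^2(\bar G,\mathbb Z),
$$
so it suffices to prove that the first and third terms vanish.

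First I would handle $H^1(\bar G,\mathbb Z)=\Hom(\bar G,\mathbb Z)$. Since $\Sp_{2g}(\mathbb Z)$ satisfies Kazhdan's property $(T)$ for $g\geq 2$, and property $(T)$ is inherited by finite-index subgroups, $\bar G$ has property $(T)$ as well; this forces $\bar G^{ab}$ to be finite, so in particular it admits no nonzero homomorphism to $\mathbb Z$. Alternatively, one can invoke the Bass--Milnor--Serre solution of the congruence subgroup problem together with the known abelianisations of principal congruence subgroups.

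Next I would analyse $H^1(\mathcal{J}_g,\mathbb Z)^{\bar G}=\Hom(\mathcal{J}_g,\mathbb Z)^{\bar G}$. The key input is Johnson's theorem: writing $H=H_1(S,\mathbb Z)$ with its symplectic form $\omega$, the free part of $\mathcal{J}_g^{ab}$ is isomorphic, as an $\Sp_{2g}(\mathbb Z)$-module, to $\wedge^3 H/(\omega\wedge H)$ via the Johnson homomorphism. Hence $\Hom(\mathcal{J}_g,\mathbb Z)$ is a lattice in the dual of this module, and after tensoring with $\mathbb Q$ it becomes an irreducible nontrivial algebraic representation of $\Sp_{2g}(\mathbb Q)$. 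Now $\bar G$, being of finite index in the arithmetic lattice $\Sp_{2g}(\mathbb Z)$, is Zariski-dense in $\Sp_{2g}(\mathbb C)$; therefore its invariants on any algebraic representation agree with the invariants of the full algebraic group, and these vanish by Schur's lemma.

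Combining the two vanishings in the five-term sequence yields $H^1(G,\mathbb Z)=0$. The main obstacle is the reliance on Johnson's deep computation of $\mathcal{J}_g^{ab}$; a secondary, but essential, technical point is to verify that the conjugation action of $\bar G$ on $\mathcal{J}_g^{ab}$ induced from $\Sigma_g$ is precisely the symplectic action on $\wedge^3 H/(\omega\wedge H)$, which is however built into the naturality of Johnson's construction.
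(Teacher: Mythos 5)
The paper does not actually prove this statement: it is quoted as the main theorem of McCarthy \cite{McC} and of Hain--Looijenga \cite{HL}, and is used as a black box. Your argument is, in essence, a correct reconstruction of the proof given in those references (it is closest to the Hain--Looijenga treatment): the five-term exact sequence of $1\to \mathcal J_g\to G\to \bar G\to 1$ is set up correctly (note that $\bar G=\delta(G)$ is indeed of finite index in $\Sp_{2g}(\bZ)$ because $\delta$ is surjective with kernel $\mathcal J_g$); the vanishing of $\Hom(\bar G,\bZ)$ via property $(T)$ of $\Sp_{2g}(\bZ)$ for $g\geq 2$ is standard; and the vanishing of $\Hom(\mathcal J_g,\bZ)^{\bar G}$ via Johnson's computation of $\mathcal J_g^{\mathrm{ab}}$ (whose torsion is killed by $\Hom(-,\bZ)$, leaving the lattice dual to $\wedge^3 H/(\omega\wedge H)$), Borel density of $\bar G$ in $\Sp_{2g}$, and the irreducibility and nontriviality of $\wedge^3 H_{\bQ}/(\omega\wedge H_{\bQ})$ is exactly the standard mechanism. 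You correctly identify the two essential external inputs (Johnson's theorem and the $\Sp$-equivariance of the Johnson homomorphism). The only caveat worth recording is that your argument requires $g\geq 3$ (for Johnson's theorem and the finite generation of $\mathcal J_g$; for $g=1,2$ the statement as written is problematic anyway), which is harmless here since the paper only invokes the theorem for $g\geq 5$. So: correct, but this is a proof the paper delegates entirely to the literature rather than one you can compare line by line with the text.
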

In the previous theorem, $H^1(G,\mathbb Z)$ stands for the cohomology in group theory with integer coefficients. By standard results, this group is isomorphic to $\Hom_{\mathbb Z}(\Ab(G),\mathbb Z)$.
\medskip 

A stronger result involving the group cohomology with rational coefficients is due to Boggi and Putnam (see \cite{Put} and \cite{Bog}). In order to introduce this result, let us recall some further information about the generators of the Torelli group $\cJ_g$.
\medskip

It is well known that the mapping class group of a compact connected oriented surface $S$ is generated by Dehn twists. If $\gamma$ is a simple loop on $S$, we will denote by $D_\gamma$ the associated Dehn twist. One says that a simple loop $\gamma$ is a {\it non-separating loop} if, by removing its support, $S$ remains connected. A pair of loops $(\gamma,\delta)$ is said to be a {\it bounding pair} if $\gamma$ and $\delta$ are homologous, non-homotopic non-separating simple loops which, together, disconnect the surface (to be more precise, one also needs to consider a suitable orientation). 
One defines the {\it Johnson group} $\cK_g$ to be the subgroup of $\Sigma_g$ generated by Dehn twists associated to separating loops. Since in \cite{Bir} and \cite{Pow} it is shown that $\cJ_g$ is generated by
$$\{D_\gamma\,|\, \gamma \mbox{ is a separating loop}\}\qquad  \mbox{ and }\qquad \{D_\gamma D_{\delta}^{-1}\,|\, (\gamma,\delta) \mbox{ is a bounding pair}\}.$$
Then, it is clear that the Johnson group is a subgroup of the Torelli group.
\medskip

The fundamental result we mentioned before is

\begin{theorem}[\cite{Put},\cite{Bog}]
\label{THM:PUTBOG}
Let $G$ be a finite index subgroup of $\Sigma_g$ containing $\cK_g$. Then $H^1(G,\bR)=0$.
\end{theorem}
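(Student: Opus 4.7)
The plan is to analyse $H^1(G,\bR)$ via the Hochschild--Serre spectral sequence applied successively to the two canonical exact sequences associated to the mapping class group,
\begin{equation*}
1\to \cJ_g\to \Sigma_g\to \Sp_{2g}(\bZ)\to 1, \qquad 1\to \cK_g\to \cJ_g\xrightarrow{\tau}\wedge^3 H/H\to 1,
\end{equation*}
where $\tau$ is the first Johnson homomorphism (the second sequence being exact after tensoring with $\bR$, by Johnson's theorem). The two representation-theoretic inputs driving the argument are Kazhdan's property (T) for $\Sp_{2g}(\bZ)$ and the vanishing of $\Sp$-invariants in $\wedge^3 H_\bR/H_\bR$ and in the Morita--Hain quotients of $\cK_g^{\mathrm{ab}}\otimes \bR$.

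First, I would set $N:=G\cap \cJ_g$ and $Q:=G/N$, which is of finite index in $\Sp_{2g}(\bZ)$ since $G$ has finite index in $\Sigma_g$. For $g\ge 2$ the lattice $\Sp_{2g}(\bZ)$ enjoys property (T), which is inherited by finite-index subgroups, so $H^1(Q,\bR)=0$. The five-term exact sequence associated to $1\to N\to G\to Q\to 1$ thus produces an injection
\begin{equation*}
H^1(G,\bR)\hookrightarrow H^1(N,\bR)^{Q}.
\end{equation*}

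Second, the hypothesis $\cK_g\subseteq G$ gives $\cK_g\subseteq N$, and $N/\cK_g$ is a finite-index subgroup of $\cJ_g/\cK_g\cong \wedge^3 H/H$ (after tensoring with $\bR$). The five-term sequence for $1\to \cK_g\to N\to N/\cK_g\to 1$, combined with taking $Q$-invariants, yields
\begin{equation*}
0\to H^1(N/\cK_g,\bR)^Q\to H^1(N,\bR)^Q\to H^1(\cK_g,\bR)^{G/\cK_g}.
\end{equation*}
Borel density implies that $Q$ is Zariski-dense in $\Sp_{2g}(\bR)$, so $Q$-invariants on any finite-dimensional rational $\Sp$-representation coincide with $\Sp$-invariants; since $\wedge^3 H_\bR/H_\bR$ is an $\Sp$-irreducible without trivial summand, $H^1(N/\cK_g,\bR)^Q=0$. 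Consequently $H^1(G,\bR)$ embeds in $H^1(\cK_g,\bR)^{G/\cK_g}$.

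The main obstacle is the final step: showing $H^1(\cK_g,\bR)^{G/\cK_g}=0$, i.e.\ that no nonzero $\Sp$-invariant homomorphism $\cK_g\to \bR$ exists. This is the essential content of Putman's and Boggi's theorems. The strategy is to exploit the $\Sp$-module structure of $\cK_g^{\mathrm{ab}}\otimes \bR$ via the higher Johnson (Morita) homomorphisms together with the work of Hain and Dimca--Papadima: every finite-dimensional $\Sp$-equivariant quotient decomposes as a sum of non-trivial $\Sp$-irreducibles, killing any invariant functional. Putman implements this by displaying explicit relations among Dehn twists along separating curves that force the vanishing of such a functional on $G$, whereas Boggi's approach realises $\Sigma_g/\cK_g$ inside the procongruence completion of $\Sigma_g$ and invokes strong rigidity for congruence subgroups of $\Sp_{2g}(\bZ)$. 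Combining the vanishing at this final stage with the two preceding reductions yields $H^1(G,\bR)=0$.
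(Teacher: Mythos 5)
This statement is quoted in the paper as an external result of Putman and Boggi; the paper gives no proof of it, so there is no internal argument to compare yours against. Judged on its own terms, your two reduction steps are essentially sound: the five-term sequence for $1\to N\to G\to Q\to 1$ together with property (T) for $\Sp_{2g}(\bZ)$ (hence for its finite-index subgroup $Q$) does give $H^1(G,\bR)\hookrightarrow H^1(N,\bR)^Q$, and the five-term sequence for $1\to\cK_g\to N\to N/\cK_g\to 1$ together with the Johnson homomorphism, Borel density and the irreducibility of $\wedge^3 H_\bR/H_\bR$ does kill $H^1(N/\cK_g,\bR)^Q$, leaving an embedding $H^1(G,\bR)\hookrightarrow H^1(\cK_g,\bR)^{G/\cK_g}$. (Minor quibble: Johnson's theorem gives exactness of $1\to\cK_g\to\cJ_g\to\wedge^3H/H\to 1$ for $g\ge 3$ on the nose; one does not ``tensor'' a sequence of nonabelian groups with $\bR$.)

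The genuine gap is the last step, and you say so yourself: you do not prove $H^1(\cK_g,\bR)^{G/\cK_g}=0$, you describe it as ``the essential content of Putman's and Boggi's theorems'' and defer back to the very papers the statement is cited from. That makes the proposal a reduction plus a citation, not a proof. Moreover, the mechanism you gesture at for this step is shakier than the first two: Borel density only converts $Q$-invariants into $\Sp$-invariants on a \emph{finite-dimensional algebraic} $\Sp$-representation, and $H^1(\cK_g,\bR)=\Hom(\cK_g,\bR)$ is not a priori of that kind --- finite generation of $\cK_g$ and finite-dimensionality of $H_1(\cK_g,\bQ)$ are hard theorems (Dimca--Papadima and later work), and even granting them one must check the $\Sigma_g/\cK_g$-action is algebraic through $\Sp$. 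Putman's actual argument sidesteps all of this: he shows directly, via explicit relations in the mapping class group, that the image of a (power of a) Dehn twist about a separating curve vanishes in $H_1(G;\bQ)$, which is exactly the statement that every $G$-conjugation-invariant homomorphism $\cK_g\to\bR$ restricted from $G$ dies. Without either that combinatorial input or a justified representation-theoretic substitute, the chain of embeddings terminates in a group you cannot show is zero.
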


It is worth to recall that these two theorems are evidence for the following conjecture (see \cite[Section 7]{Iv15}):
\begin{conjecture}[Ivanov]
All finite index subgroups $G$ of $\Sigma_g$ satisfy $H^1(G,\bZ)=0$.
\end{conjecture}

Now we prove:

\begin{proposition}
\label{PROP:ETALETORELLI}
Let $\pi:M\to\MDM$ be a finite morphism with branch divisor $B$ and set $M^o=\pi^{-1}(\Mgo)$ and $\pi^o=\pi|_{M^o}$. Let $G={\pi^o}_*(\pi_1(M^o))$ be the image of the fundamental group of $M^o$. Assume that $g\geq 5$, $\cK_g\subseteq G$ and that the branch $B$ has support on $\Delta\setminus \Delta_1$. Then $H^0(M^o,\Omega^1_{M^o})=0$.
\end{proposition}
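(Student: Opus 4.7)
The plan is to reduce the statement to an application of part (c) of Theorem \ref{THM:MAIN}. Since the branch divisor $B$ is supported on $\Delta \setminus \Delta_1 \subseteq \MDM \setminus \Mgo$, the restricted map $\pi^o \colon M^o \to \Mgo$ is finite and \'etale; in particular the hypothesis of Theorem \ref{THM:MAIN} on the branch is met. Thus the whole work consists in verifying the auxiliary assumption $H^1(M^o,\bC) = 0$, and it is exactly here that the condition $\cK_g \subseteq G$ and Theorem \ref{THM:PUTBOG} come in.

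First I would identify the fundamental group of $M^o$. Recall that for $g \geq 5$ one has $\pi_1(\Mgo) \cong \Sigma_g$, as discussed in the introduction, because $\mathcal{T}_g$ is contractible and $\cM_g \setminus \Mgo$ is removed in sufficiently high codimension. Since $\pi^o$ is an \'etale (hence topological) covering, the induced map $\pi^o_\ast \colon \pi_1(M^o) \to \pi_1(\Mgo) = \Sigma_g$ is injective with image $G$, so $\pi_1(M^o) \cong G$ (I may assume $M^o$ is connected; otherwise the same argument applies on each connected component, which corresponds to a coset of $G$). Moreover, finiteness of $\pi$ forces $G$ to be of finite index in $\Sigma_g$.

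Next I would translate topological cohomology into group cohomology. By Hurewicz and universal coefficients,
$$H^1(M^o,\bR) \cong \Hom_{\bZ}\bigl(H_1(M^o,\bZ),\bR\bigr) \cong \Hom_{\bZ}\bigl(G^{\mathrm{ab}},\bR\bigr) \cong H^1(G,\bR),$$
where on the right we have group cohomology with real coefficients. The two standing assumptions $\cK_g \subseteq G$ and $[\Sigma_g : G] < \infty$ are precisely the hypotheses of Theorem \ref{THM:PUTBOG}, which then gives $H^1(G,\bR) = 0$. Consequently $H^1(M^o,\bR) = 0$, and extending scalars $H^1(M^o,\bC) = 0$.

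With $H^1(M^o,\bC) = 0$ in hand and the branch hypothesis on $\Delta \setminus \Delta_1$ already verified, part (c) of Theorem \ref{THM:MAIN} applies directly and yields $H^0(M^o,\Omega^1_{M^o}) = 0$, as desired. The only genuinely non-formal step is the identification $\pi_1(M^o) \cong G$ with its finite-index embedding in the mapping class group; all of the analytic content (the concavity/completeness argument of Theorem \ref{THM:MAIN}) and all of the topological content (the Putman--Boggi vanishing of Theorem \ref{THM:PUTBOG}) are imported from the previously established results, and the proposition is essentially a packaging of these two inputs.
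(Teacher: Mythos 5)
Your proposal is correct and follows essentially the same route as the paper: identify $\pi_1(M^o)\simeq G$ via covering space theory, use universal coefficients to get $H^1(M^o,\bR)\simeq H^1(G,\bR)$, apply Theorem \ref{THM:PUTBOG} to the finite-index subgroup $G\supseteq\cK_g$, and conclude with Theorem \ref{THM:MAIN}(c). The only (harmless) differences are that you spell out the connectedness caveat and work with $\bR$-coefficients directly rather than passing through $H^1(G,\bZ)$.
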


\begin{proof}
Since $\pi^o$ is \'etale, and it is associated to the subgroup
$G$ of $\pi_1(\Mgo)\simeq \Sigma_g$, we have that  $\pi_1(M^o)\simeq G$. Then, by the Universal coefficients Theorem yield 
\begin{equation}\label{univ_coefficients}
H^1(M^o,\bZ)\simeq \Hom_{\bZ}(H_1(M^o,\bZ),\bZ)\simeq \Hom_{\bZ}(\Ab(G),\bZ)\simeq H^1(G,\bZ).
\end{equation}
\medskip
By assumption, $G$ is a subgroup of $\Sigma_g$ of finite index that contains $\cK_g$ so, by Theorem \ref{THM:PUTBOG}, we have $H^1(M^o,\bC)=0$. We can conclude using Theorem \ref{THM:MAIN}$(c)$.
\end{proof}

\begin{lemma}
\label{LEM:JOHNSON}
Let $\pi:M\to\MDM$ be a finite morphism with branch divisor $B$ whose support is contained in $\Delta_0$. Assume that $g\ge 5$. Set $M^o=\pi^{-1}(\Mgo)$ and $\pi^o=\pi|_{M^o}$. Let $G={\pi^o}_*(\pi_1(M^o))$ be the image of the fundamental group of $M^o$. Then 
$$B \mbox{ has support in } \Delta_0 \Longrightarrow \cK_g\subseteq G.$$
\end{lemma}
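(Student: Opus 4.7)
The strategy is to show that every generator of $\mathcal{K}_g$, namely every Dehn twist $D_\gamma$ along a separating simple closed curve $\gamma \subset S$, belongs to $G$. The bridge between the topological Dehn twists and the algebraic geometry of $\pi$ is Picard--Lefschetz theory: for each $i=1,\dots,\lfloor g/2\rfloor$, the local monodromy of $\Mgo\hookrightarrow\MDM$ around a generic point of $\Delta_i$ is (up to conjugation) a Dehn twist along a separating vanishing cycle of topological type $(i,g-i)$. The hypothesis $\operatorname{supp}(B)\subseteq\Delta_0$ will allow every such monodromy element to be lifted through $\pi$, forcing it into $G$.

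Concretely, the plan is to fix $1\le i\le \lfloor g/2\rfloor$ and a general point $p\in\Delta_i$, together with a small analytic disk $\mathbb D\subset\MDM$ transverse to $\Delta_i$ at $p$ so that $\mathbb D^{*}:=\mathbb D\setminus\{p\}\subset\Mgo$. After fixing a base point $q_0\in\Mgo$ and a path $\alpha$ from $q_0$ to a point of $\mathbb D^{*}$, Picard--Lefschetz identifies the monodromy element $\mu_{p,\alpha}\in\pi_1(\Mgo,q_0)\simeq\Sigma_g$ obtained from a generator of $\pi_1(\mathbb D^{*})$ with the Dehn twist $D_\gamma$ along a separating curve $\gamma$ of type $(i,g-i)$. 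Since $i\ge 1$, the point $p$ lies outside the branch locus of $\pi$, so $\pi$ is étale in an analytic neighborhood of $p$; shrinking $\mathbb D$, the preimage $\pi^{-1}(\mathbb D)$ is a disjoint union of analytic disks each mapping isomorphically onto $\mathbb D$. Lifting both the path $\alpha$ and the local loop around $p$ then produces a lift of the whole loop representing $\mu_{p,\alpha}$ to a loop in $M^o$ based at a chosen $\tilde q_0\in (\pi^o)^{-1}(q_0)$, which shows $\mu_{p,\alpha}\in G$.

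To pass from a single Dehn twist to every separating Dehn twist, I would vary the path $\alpha$ (with $p$ and $\mathbb D$ fixed): any two choices $\alpha,\alpha'$ give $\mu_{p,\alpha'}=\sigma\mu_{p,\alpha}\sigma^{-1}$ where $\sigma=\alpha'\alpha^{-1}\in\Sigma_g$, and every $\sigma\in\Sigma_g$ arises this way, so the entire $\Sigma_g$-conjugacy class of $D_\gamma$ sits inside $G$. Since $\Sigma_g$ acts transitively on isotopy classes of separating simple closed curves of each fixed topological type $(i,g-i)$, this gives every separating Dehn twist of that type. Letting $i$ run over $1,\dots,\lfloor g/2\rfloor$ exhausts all topological types of separating curves and hence every generator of $\mathcal{K}_g$, yielding $\mathcal{K}_g\subseteq G$.

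The main technical point to handle with care is the interaction with the orbifold structure of $\MDM$: at a generic point of $\Delta_i$ (most notably of $\Delta_1$, where the elliptic involution produces a non-trivial automorphism) the coarse space may carry quotient singularities, so ``\'etale over $p$'' must be interpreted in a way compatible with the Picard--Lefschetz description. The cleanest way to sidestep this is to run the argument on the Deligne--Mumford stack $\overline{\mathcal M}_g$, where the local orbifold fundamental group at a generic point of $\Delta_i$ is cyclic generated by the relevant Dehn twist; the hypothesis $\operatorname{supp}(B)\subseteq\Delta_0$ is then precisely the statement that each such cyclic local monodromy subgroup is contained in $G$, and the rest of the argument goes through unchanged.
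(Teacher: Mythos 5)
Your proof is correct and follows essentially the same route as the paper's: represent each separating Dehn twist as the monodromy of a small loop around a boundary divisor $\Delta_i$ with $i\neq 0$, observe that the hypothesis on the branch divisor makes $\pi$ a local isomorphism over the generic point of $\Delta_i$ so that the loop lifts to a closed loop in $M^o$, and conclude that the twist lies in $G$. You additionally supply two details the paper leaves implicit --- the change-of-path/conjugation step needed to capture every separating curve of a given topological type, and the order-two automorphism at the generic point of $\Delta_1$ (where the coarse-space monodromy is only a square root of the twist, which still forces $D_\gamma\in G$).
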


\begin{proof}
Assume that $B$ has support in $\Delta_0$. Let $\alpha\in \pi_1(\Mgo)$ be an element corresponding to a Dehn twist associated to a separating loop. Then $\alpha$ can be represented as a simple loop around a divisor $\Delta_i$ with $i\neq 0$. By assumption $\Delta_i$ is not contained in the branch locus so $\pi^o$ is a homeomorphism near the preimage of $\Delta_i$. In particular, $\alpha$ is lift to a simple closed arc in $M^o$. Since elements of $G$ are exactly loops of $\Mgo$ which lift to loops, we have that $\alpha\in G$. As $\cK_g$ is generated by separating loops, we have $\cK_g\subseteq G$.
\end{proof}

An immediate consequence of Lemma \ref{LEM:JOHNSON} is the following:

\begin{corollary}
\label{COR:BRANCHONDELTA0}
Let $\pi,\pi^o, M^o$ and $G$ be like in Theorem \ref{PROP:ETALETORELLI}. If we assume that $g\geq 5$ and that $B$ has support on $\Delta_0$, we have $H^0(M^o,\Omega^1_{M^o})=0$.
\end{corollary}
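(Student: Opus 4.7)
The plan is to observe that this corollary is essentially a transcription of the hypotheses: the two previous results, Lemma \ref{LEM:JOHNSON} and Proposition \ref{PROP:ETALETORELLI}, combine immediately to yield the conclusion. No new geometric or analytic input is needed.

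More precisely, I would first note that the condition \textquotedblleft $B$ has support on $\Delta_0$\textquotedblright{} is strictly stronger than the condition \textquotedblleft $B$ has support on $\Delta\setminus\Delta_1$\textquotedblright{} appearing in Proposition \ref{PROP:ETALETORELLI}, since $\Delta_0$ is one of the components making up $\Delta\setminus\Delta_1$. So the topological/analytic hypothesis needed to apply Theorem \ref{THM:MAIN}$(c)$ through Proposition \ref{PROP:ETALETORELLI} is already in place, provided we can verify that the image group $G=\pi^o_*(\pi_1(M^o))$ contains the Johnson group $\cK_g$.

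The containment $\cK_g\subseteq G$ is exactly what Lemma \ref{LEM:JOHNSON} provides, under the hypothesis that the branch divisor is supported on $\Delta_0$, and assuming $g\geq 5$. Thus I would simply invoke Lemma \ref{LEM:JOHNSON} to obtain $\cK_g\subseteq G$, and then plug $\pi,\pi^o,M^o,G$ into Proposition \ref{PROP:ETALETORELLI}: its hypotheses are $g\geq 5$ (given), $\cK_g\subseteq G$ (just established), and branch support avoiding $\Delta_1$ (implied by support in $\Delta_0$). The conclusion $H^0(M^o,\Omega^1_{M^o})=0$ follows directly.

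Since the corollary is a formal consequence of the two results already proved, there is no substantive obstacle; the only thing to be careful about is to state clearly which hypothesis is used in which step, to make transparent that the role of Lemma \ref{LEM:JOHNSON} is purely to upgrade the hypothesis \textquotedblleft support on $\Delta_0$\textquotedblright{} from a geometric condition to the group-theoretic condition $\cK_g\subseteq G$ required by Proposition \ref{PROP:ETALETORELLI}.
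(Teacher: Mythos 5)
Your proposal is correct and follows exactly the paper's own argument: invoke Lemma \ref{LEM:JOHNSON} to obtain $\cK_g\subseteq G$ from the hypothesis that $B$ is supported on $\Delta_0$, note that this hypothesis also implies the branch condition of Proposition \ref{PROP:ETALETORELLI}, and then apply that proposition. There is nothing to add.
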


\begin{proof}
It is enough to observe that by Lemma \ref{LEM:JOHNSON} we have that $\cK_g\subseteq G$. Then one conclude again using Proposition \ref{PROP:ETALETORELLI}.
\end{proof}

\begin{remark}
Assuming the Ivanov's conjecture, for any $\pi$ as in Theorem \ref{THM:MAIN} ramified at most on $\Delta \setminus \Delta_1$, we would have $H^1(M^o,\bZ)=H^0(M^o,\Omega_{M^o}^1)=0$ by Proposition \ref{PROP:ETALETORELLI}.
\end{remark}


\section{Holomorphic $1$-forms of moduli spaces of curves with level structure}\label{SEC:LEVEL}

In this section we apply the results of Section \ref{SEC:CoveringOfModuli} to the moduli space $\cM_g[2]$ curves of genus $g$ with $2$-level structure. We work under the assumption that $g\geq 5$.
\medskip

We first shortly summarize some facts on these moduli spaces, we refer to \cite{ACG2}. 

Let $\cM_g[2]$ be the moduli space of curves of genus $g$ with $2$-level structure, the space parametrizing isomorphism classes $[(C,\rho)]$ where $C$ is a smooth curve of genus $g$ and $\rho:H_1(C,\bZ_2)\to \bZ_2^{\oplus 2g}$ is a symplectic isomorphism with a fixed symplectic bilinear form on $\bZ_2^{\oplus 2g}$ and the pairing induced on $H_1(C,\bZ_2)$ by the intersection pairing on $C$. The space $\cM_g[2]$ is normal
and admits a natural forgetful morphism $\pi':\cM_g[2]\to \cM_g$ which is a finite cover of $\cM_g$ unramified on $\Mgo$.

Consider the compactification $\overline{\cM}_g[2]$ of $\cM_g[2]$ as the normalization of $\MDM$ in the field of rational functions of $\cM_g[2]$ which gives the following diagram

$$
\xymatrix{
\cM_g[2] \ar[d]_{\pi'}\ar@{^{(}->}[r] & \overline{\cM}_g[2]\ar[d]^{\pi}\\
\cM_g \ar@{^{(}->}[r] & \MDM
}
$$

One can show that the morphism $\pi$ is unramified in codimension $1$ outside the inverse image of $\Delta_0$. Notice that this fact is no longer true for $m$-level structures with $m$ greater than two due to the automorphism of order $2$ on the generic point of $\Delta_1$. 
More in detail, to see that the morphism is unramified in codimension $1$ outside $\Delta_0$, notice that the Torelli map $\tau: \cM_g\to \cA_g$ extends to $\Sat:\MDM\to \ASat$ and, for $i\neq 0$, the generic point of $\Delta_i$ has image in $\cA_g$. 

The general point $[C_i\cup_p C_{g-i}]$ of $\Delta_i$, for $i\neq 0$, has in fact, as image, the abelian variety $J(C_i)\times J(C_{g-i})$, which lies in $\cA_g$. 

Recall that a $2$-level structure on an abelian variety $A$ is given by the choice of a symplectic isomorphism $$\rho:H_1(A,\bZ_2)\to \bZ_2^{\oplus 2g}.$$
In particular, a level structure on a Jacobian  corresponds to a level structure on the associated curve, since the homology of a Jacobian is exactly the homology of the associated curve. In other words, if $\cA_g[2]$ denotes the moduli space of abelian varieties with $2$-level structures, we have a commutative diagram

$$
\xymatrix{
\cM_g[2] \ar[d]_{\pi'}\ar[r] & \cA_g[2]\ar[d]\\
\cM_g \ar[r]_{\tau} & \cA_g
}
$$
which extends on $\MDM \setminus\Delta_0$. Consequently, for $m=2$ where the automorphism of order $2$ on the generic point of the divisor $\Delta_1$ does not produce ramification, we conclude that the branch divisor intersects at most on $\Delta_0$.

Since $\pi^o$ is \'etale we have that $G$ is a finite index subgroup of $\pi_1(\Mgo)=\Sigma_g$. Hence, $\pi:\overline{\cM}_g[2]\to \MDM$ is a finite morphism with branch supported at most on $\Delta_0$ so we are in the setting of Corollary \ref{COR:BRANCHONDELTA0}. Then we obtain the following:

\begin{theorem}\label{thm:level}
For $g\geq 5$ and $m\geq 2$, $\Mgo[2]=\bar{\pi}^{-1}(\Mgo)\subset \overline{\cM}_g[2]$ has no holomorphic 1-forms, that is  $H^0(\Mgo[2],\Omega^1_{\Mgo[2]})=0$.
\end{theorem}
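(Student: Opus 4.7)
The plan is to recognize this theorem as a direct application of Corollary \ref{COR:BRANCHONDELTA0} to the finite morphism $\pi:\overline{\cM}_g[2]\to\MDM$. Two things must be checked: first, that $\pi$ is finite and that $\pi^o=\pi|_{\Mgo[2]}$ is \'etale onto $\Mgo$; second, that the branch divisor $B$ of $\pi$ has support contained in $\Delta_0$. Once both are established, the corollary immediately yields $H^0(\Mgo[2],\Omega^1_{\Mgo[2]})=0$.

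The finiteness of $\pi$ and \'etaleness of $\pi^o$ are built into the very construction of $\cM_g[2]$: a $2$-level structure is a choice of symplectic isomorphism $\rho:H_1(C,\bZ_2)\to \bZ_2^{\oplus 2g}$, the local system $R^1\pi_\ast\bZ_2$ is locally trivial on $\Mgo$, and the monodromy factors through the symplectic representation $\delta:\Sigma_g\to \Sp_{2g}(\bZ_2)$. Hence $\pi^o$ is a finite \'etale cover associated to the finite-index subgroup $\ker\delta_2\subset \Sigma_g$, where $\delta_2$ is the reduction of $\delta$ modulo $2$. By construction $\overline{\cM}_g[2]$ is the normalization of $\MDM$ in the function field of $\cM_g[2]$, so $\pi$ is finite.

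To control the ramification, I would use exactly the argument sketched just before the statement. The Torelli morphism extends to $\Sat:\MDM\to\ASat$, and for $i\geq 1$ the generic point of $\Delta_i$ is sent into $\cA_g\subset\ASat$: the stable curve $C_i\cup_p C_{g-i}$ is of compact type, so its generalised Jacobian is the honest abelian variety $J(C_i)\times J(C_{g-i})$. A $2$-level structure on this product abelian variety is the same as a $2$-level structure on the original nodal curve (the homology with $\bZ_2$-coefficients agrees), so over $\Delta_i\setminus\Delta_0$ the cover $\pi$ is the pull-back via $\Sat$ of the \'etale cover $\cA_g[2]\to\cA_g$ and is therefore unramified. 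The specific feature that makes the argument work for $m=2$ (but fail for general $m$) is that the elliptic involution acting on the generic fiber of $\Delta_1$ is $-\id$ on the Jacobian and thus acts trivially on $2$-torsion, so it contributes no ramification. Consequently $B\subset \Delta_0$, as required.

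With these two facts in hand, Corollary \ref{COR:BRANCHONDELTA0} applies verbatim: the hypothesis $B\subset \Delta_0$ together with Lemma \ref{LEM:JOHNSON} guarantees that the image of $\pi_1(\Mgo[2])$ in $\pi_1(\Mgo)\simeq \Sigma_g$ contains the Johnson group $\cK_g$, which via Theorem \ref{THM:PUTBOG} yields $H^1(\Mgo[2],\bC)=0$; combined with the $1$-completeness from Theorem \ref{THM:MAIN} this forces the desired vanishing. The main obstacle, and the only step that is really specific to this moduli space, is the ramification analysis along $\Delta_1$; everything else is formal from the general machinery built in Sections \ref{SEC:One} and \ref{SEC:CoveringOfModuli}.
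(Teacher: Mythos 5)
Your proposal is correct and follows essentially the same route as the paper: identify $\pi\colon\overline{\cM}_g[2]\to\MDM$ as a finite morphism, use the extension of the Torelli map to $\ASat$ and the compatibility with $\cA_g[2]\to\cA_g$ to see that the branch divisor is supported on $\Delta_0$ (with the key point that for $m=2$ the order-two automorphism at the generic point of $\Delta_1$ acts trivially on $2$-torsion and so contributes no ramification), and then invoke Corollary \ref{COR:BRANCHONDELTA0}. Your explicit remark that the elliptic involution is $-\id$ on the Jacobian is a slightly more detailed justification of a point the paper only states, but the argument is the same.
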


\section{Holomorphic one forms on moduli of Prym curves and Prym locus}
\label{SEC:PRYM}

In this section we apply the results of Sections \ref{SEC:CoveringOfModuli} and \ref{SEC:LEVEL} to the moduli space $\cR_g$ of unramified double covers of curves of genus $g$ and to the Prym locus  of Prym varieties in the moduli space  of principally polarized abelian varieties.
\medskip

Let $\cR_g$ be the moduli space of unramified double covers of curves of genus $g$, the space parametrizing classes of pairs $[(C,\eta)]$ modulo automorphisms where $C$ is a smooth curve of genus $g$ and $\eta\in \Pic^{0}(C)\setminus\{\cO_C\}$ such that $\eta^2=\cO_C$. The natural forgetful morphism $\pi:\cR_g\to \cM_g$ is an unramified covering of degree $2^{2g}-1$.

Let $\cM_g[2]$ be the moduli space of curves of genus $g$ with level-$2$ structure. There is a natural relation between $\cR_g$ and $\cM_g[2]$. Indeed, a $2$-level structure on a smooth curve $C$ is a symplectic isomorphism $H_1(C,\bZ_2)\simeq \bZ_2^{2g}$, which by definition fixes the $2$-torsion of the homology. There is a natural bijection between $\Pic^0(C)[2]$ and $H_1(C,\bZ_2)$, and so elements of $\cR_g$ fixes one non-trivial element of $H_1(C,\bZ_2)$. This allows to compare the finite morphisms
$$\pi_1:\cR_g\to \cM_g\quad \mbox{ and }\quad \pi_2:\cM_g[2]\to \cM_g.$$
More precisely, on the restriction to the loci $\cR_g^o=\pi_1^{-1}(\Mgo)$, $\cM_g^o[2]$ (where the restriction $\tilde{\pi}_i$ of $\pi_i$ is unramified) we have the following diagram
\begin{equation}
\label{DIAG:PRYM}
\xymatrix{
\cM_g^o[2] \ar[dd]^{\tilde{\pi}_2}_{/\Sp_{2g}(\bZ_2)}\ar@{->>}[rd]^{\pi_{12}}\\
 & \cR_g^o\ar[ld]^{\tilde{\pi}_1}\\
\Mgo
}
\end{equation}
where the fiber of $\pi_{12}$ over $[(C,\eta)]$ are all the elements $[(C,\delta)]$ where $\delta(\eta)=\eta$.

This consideration leads to the following result as an application of Theorem \ref{thm:level}.

\begin{theorem}\label{thm:prym}
For $g\geq 5$,  $\cR_g^o=\pi_i^{-1}(\Mgo)\subset R_g$ has no holomorphic 1-forms, i.e.  $H^0(\cR_g^o,\Omega^1_{\hat{\cR_g^o}})=0$.
\end{theorem}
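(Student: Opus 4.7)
The plan is to exploit Diagram (\ref{DIAG:PRYM}), which factors the étale cover $\tilde{\pi}_2 \colon \cM_g^o[2]\to \Mgo$ through the Prym moduli as $\tilde{\pi}_2=\tilde{\pi}_1\circ\pi_{12}$, and to reduce the vanishing on $\cR_g^o$ to the vanishing on $\cM_g^o[2]$ already proved in Theorem \ref{thm:level}. The strategy parallels the standard trick by which a vanishing theorem on a base can be pushed down from a cover: if $\pi_{12}$ is étale and surjective, then pullback of holomorphic $1$-forms is injective, so $H^0(\cR_g^o,\Omega^1)$ embeds into $H^0(\cM_g^o[2],\Omega^1)=0$.

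First I would check that $\pi_{12}\colon\cM_g^o[2]\to\cR_g^o$ is a finite étale surjection. Both $\tilde{\pi}_1$ and $\tilde{\pi}_2$ are étale coverings on their chosen open sets (curves in $\Mgo$ have trivial automorphism group, so there is no stacky ramification), and by construction $\tilde{\pi}_1\circ\pi_{12}=\tilde{\pi}_2$. Standard properties of étale morphisms then force $\pi_{12}$ itself to be étale. Group-theoretically $\pi_{12}$ realizes $\cR_g^o$ as the quotient of $\cM_g^o[2]$ by the stabilizer in $\Sp_{2g}(\bZ_2)$ of a chosen nonzero element $\eta\in H_1(C,\bZ_2)$; this stabilizer acts freely on $\cM_g^o[2]$, giving a Galois étale cover of the desired degree $|\Sp_{2g}(\bZ_2)|/(2^{2g}-1)$.

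Next I would complete the argument. Given $\omega\in H^0(\cR_g^o,\Omega^1_{\cR_g^o})$, its pullback $\pi_{12}^*\omega$ lies in $H^0(\cM_g^o[2],\Omega^1_{\cM_g^o[2]})$, which vanishes by Theorem \ref{thm:level}. Since $\pi_{12}$ is étale and surjective, $d\pi_{12}$ is an isomorphism on every tangent space; hence $\pi_{12}^*$ on global holomorphic $1$-forms is injective, and we conclude $\omega=0$.

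The main obstacle is the étale verification for $\pi_{12}$, namely ruling out that any non-trivial stabilizer element fixes some point of $\cM_g^o[2]$. This however follows automatically from the factorization and the fact that the two outer morphisms $\tilde{\pi}_1$ and $\tilde{\pi}_2$ are both étale over $\Mgo$; one could also observe that an alternative proof could avoid the factorization entirely by extending $\cR_g^o\to\Mgo$ to a finite cover $\overline{\cR}_g\to\MDM$ whose branch is contained in $\Delta_0$ and then invoking Corollary \ref{COR:BRANCHONDELTA0} directly, but the route through $\cM_g^o[2]$ is cleaner because it piggybacks on the work already done in Section \ref{SEC:LEVEL}.
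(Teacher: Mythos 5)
Your proposal is correct and follows essentially the same route as the paper: the paper's proof also uses Diagram~(\ref{DIAG:PRYM}) to pull back a putative nonzero $1$-form on $\cR_g^o$ along $\pi_{12}$ (the paper only invokes that $\pi_{12}$ is dominant, which already makes the pullback injective, rather than verifying \'etaleness) and derives a contradiction with Theorem~\ref{thm:level}. The alternative argument you sketch via $\overline{\cR}_g\to\MDM$ and the branch locus in $\Delta_0$ is likewise the one the paper records immediately after its proof.
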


\begin{proof}
It is enough to observe that the morphism $\pi_{12}$ in Diagram \ref{DIAG:PRYM} is dominant and so, in particular, non-zero holomorphic one forms on $\cR_g^o$ would induce holomorphic one forms on $\cM_g^o[2]$ contradicting Theorem \ref{thm:level} proved in Section \ref{SEC:LEVEL}. 
\end{proof}

An alternative strategy to prove the theorem would use Theorem  \ref{THM:MAIN} on the finite map $\pi:\Rgbar\to \MDM$, where $\Rgbar$ is the compactification of $\cR_g$ introduced in \cite{Bea,BCF}.
This space parametrizes Prym curves (see \cite{BCF}), i.e. classes of triples $(X,\eta,\beta)$ where $X$ is a quasi-stable curve of genus $g$, $\eta$ is a line bundle on $X$ of total degree $0$ with $\eta|_E=\cO_E(1)$ for each exceptional component $E$ of $X$, and $\beta$ is a sheaf homomorphism $\eta^2\to \cO_X$ which is generically non-zero along all non-exceptional irreducible components of $X$.

It is proven that the forgetful morphism $\cR_g\to \cM_g$ extends to a ramified morphism $\pi: \Rgbar\to \MDM$ which associates to $[(X,\eta,\beta)]$ the class $[st(X)]$ of the stable model of $X$. In \cite{FL} it is shown that the branch locus coincides with the divisor $\Delta_0\subset \MDM$, and that the ramification is described by the formula $$K_{\Rgbar}=\bar{\pi}^*(K_{\MDM})+\delta^{ram}_0.$$
Here $\delta_0^{ram}$ is the class of the divisor $\Delta_0^{ram}\in \Pic(\Rgbar)$, whose general point $[(X,\eta,\beta)]$ satisfies the following:
\begin{itemize}
\item $X=C\cup E$ where is $C$ a general smooth curve of genus $g-1$ and $E$ is a $\bP^1$ meeting $C$ transversely in two distinct points $p$ and $q$;
\item $\eta|_{C}^{\otimes2}=\cO_C(p+q)$.
\end{itemize}
This proves that we are under the assumptions of Theorem \ref{THM:MAIN}, namely, that the branch locus is contained in $\Delta\setminus\Delta_0$. 

We now focus on the Prym locus $\cP_{g}\subset \mathcal A_{g}$, which is  the image of the Prym map $Pr_{g+1}:\cR_{g+1}\to \cP_{g}$. 
It is known that the differential of the Prym map is injective out of the locus of pairs $[(C,\eta)]$ such that the Clifford index of $C$ is $\le 2$ (see \cite{Nar}). Since the tetragonal locus has dimension $2g+3$ and, for $g=10$, the locus of plane sextics has dimension $19$, we obtain that the codimension of the locus where the differential of the Prym map is not injective has codimension at least $3$ for $g\ge 9$. Therefore, assuming $g\ge 8$, we can mimic the argument with test surfaces in Sections \ref{SEC:One} and \ref{SEC:CoveringOfModuli} by assuming that these surfaces are contained in the open set $\mathcal U \subset \mathcal R_{g+1}$ where the differential is injective. We obtain that there are not differential forms in the intersection  $\mathcal R_{g+1}^o \cap \mathcal U$ and that the image of this intersection is contained in the smooth locus,  $\mathcal P_g^o$, of $\mathcal P_g$. This implies the following result.

\begin{theorem}\label{thm:prym2}
For $g\geq 8$, $\cP_g^o$ has no holomorphic 1-forms, i.e.  $H^0(\cP_g^o,\Omega^1_{\cP_g^o})=0$.
\end{theorem}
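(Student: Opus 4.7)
The plan is to work on $\cR_{g+1}$ and its Prym map $Pr:=Pr_{g+1}:\cR_{g+1}\to \cA_g$ with image $\cP_g$, and to pull back a putative holomorphic $1$-form on $\cP_g^o$ to an open subset of $\cR_{g+1}^o$ where the Prym differential is injective. Let $\cU\subseteq \cR_{g+1}$ be that open locus, whose complement parametrizes pairs $[(C,\eta)]$ with $\mathrm{Cliff}(C)\le 2$ and, by the estimates recalled just before the theorem (tetragonal locus of dimension $2g+3$ and plane sextics for $g=10$), has codimension at least $3$ in $\cR_{g+1}$ once $g+1\ge 9$, i.e.\ $g\ge 8$. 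Put $\cV:=\cR_{g+1}^o\cap \cU$. Since $\dim \cR_{g+1}=3g=\dim \cP_g$ (generic finiteness of the Prym map), the injectivity of $dPr$ on $\cV$ forces $Pr(\cV)\subseteq \cP_g^o$ and makes $Pr\colon \cV\to \cP_g^o$ a local isomorphism. In particular $(dPr)^*$ is an isomorphism at every point of $\cV$, so it suffices to prove $H^0(\cV,\Omega^1_\cV)=0$: if $\omega\in H^0(\cP_g^o,\Omega^1_{\cP_g^o})$, then $Pr^*\omega\in H^0(\cV,\Omega^1_\cV)$, and its vanishing together with the density of $Pr(\cV)$ in $\cP_g^o$ force $\omega=0$.

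The vanishing on $\cV$ will be obtained by mimicking, as suggested in the excerpt, the proof of Theorem \ref{THM:MAIN}. Let $\bar\pi:\Rgbar\to \MDM$ be the ramified extension of the forgetful map to Prym curves, which by \cite{FL} ramifies only over $\Delta_0$. Given $\tilde\omega\in H^0(\cV,\Omega^1_\cV)$ and a point $q\in \cV$ with a $2$-plane $\sigma\subset T_q \cV$, I would choose a test surface $S\subset \MDM$ through $\bar\pi(q)$ with tangent $d\bar\pi(\sigma)$. By \cite[Proposition 3.5]{FPT}, $S$ is a smooth regular surface meeting the boundary only along $\Delta_1$ and satisfying $\deg((K_S+2E)|_E)<0$ for $E=S\cap \Delta_1$, while the ramification of $\bar\pi$ lies over $\Delta_0$ and is therefore disjoint from $\bar\pi^{-1}(S)$. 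Hence Lemma \ref{LEM:Lemma2b} applies to the connected component $\hat S$ of $\bar\pi^{-1}(S)$ through $q$, provided $\hat S$ is contained in $\cU$. The latter is arranged by a dimension count: the Clifford-$\le 2$ locus has codimension $\ge 3$ while $\hat S$ has dimension $2$, and the prescribed $(q,\sigma)$ already lies in $\cV$, so a general choice of test surface through $(q,\sigma)$ has $\hat S\subset \cU$. Varying $(q,\sigma)$, the lemma gives $d\tilde\omega\equiv 0$ on $\cV$.

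From closedness to vanishing there are two natural routes. The first, matching the structure of Theorem \ref{THM:MAIN}$(c)$, combines $0$-completeness of $\cV$ (it is covered by complete curves, namely lifts of test curves in $\Mgo$, which are disjoint from the Clifford-$\le 2$ locus for generic choice by the same codimension bound) with $H^1(\cV,\bC)=0$: the complement $\cR_{g+1}^o\setminus \cV$ has codimension $\ge 3$ in the smooth variety $\cR_{g+1}^o$, so $H^1(\cR_{g+1}^o,\bC)\to H^1(\cV,\bC)$ is an isomorphism, and the source vanishes via the Boggi--Putman argument applied to $\bar\pi:\Rgbar\to \MDM$, exactly as in the alternative proof of Theorem \ref{thm:prym}. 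The second, more direct route observes that $\cR_{g+1}^o$ is smooth and $\cR_{g+1}^o\setminus \cV$ has codimension $\ge 2$, whence Hartogs extension for the locally free sheaf $\Omega^1$ gives $H^0(\cV,\Omega^1_\cV)=H^0(\cR_{g+1}^o,\Omega^1_{\cR_{g+1}^o})=0$ by Theorem \ref{thm:prym}.

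The main obstacle I anticipate lies in the middle paragraph: guaranteeing that a test surface with prescribed point $\bar\pi(q)$ and tangent $2$-plane $d\bar\pi(\sigma)$ can simultaneously be chosen general enough that its lift $\hat S\subset \Rgbar$ lies in $\cU$, while still satisfying the contraction/boundary hypotheses of Lemma \ref{LEM:Lemma2b}. Once the codimension estimate $\mathrm{codim}\{\mathrm{Cliff}\le 2\}\ge 3$ is in hand (which is precisely the step that imposes $g\ge 8$), this reduces to a standard transversality argument, but the need to control ramification, boundary incidence and the Clifford locus simultaneously is the one delicate point in the proof.
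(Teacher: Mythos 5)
Your main line of argument coincides with the paper's: pass to the Prym map $Pr_{g+1}:\cR_{g+1}\to\cP_g$, invoke the injectivity of $dPr$ off the Clifford-index $\le 2$ locus, check that this locus has codimension at least $3$ once $g\ge 8$, and then re-run the test-surface argument of Sections \ref{SEC:One} and \ref{SEC:CoveringOfModuli} inside the open set $\cU$ where $dPr$ is injective; this is exactly the paper's (very terse) proof, including the assertion--which you make in the same words and which neither you nor the paper justifies beyond the injectivity of the differential--that $Pr(\cR_{g+1}^o\cap\cU)\subseteq\cP_g^o$; that assertion is what guarantees that $Pr^*\omega$ is defined on all of $\cV$ and not merely on $\cV\cap Pr^{-1}(\cP_g^o)$, so it is load-bearing in both versions. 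Where you genuinely improve on the paper is your second route: since $\cR_{g+1}^o$ is smooth and $\cR_{g+1}^o\setminus\cV$ has codimension $\ge 3$, Hartogs extension for sections of the locally free sheaf $\Omega^1_{\cR_{g+1}^o}$ gives $H^0(\cV,\Omega^1_{\cV})=H^0(\cR_{g+1}^o,\Omega^1_{\cR_{g+1}^o})$, and the latter vanishes by Theorem \ref{thm:prym} applied in genus $g+1\ge 5$. This completely bypasses the re-run of the test-surface machinery on $\cV$, and with it the transversality difficulty you rightly flag in your last paragraph (choosing a test surface through a prescribed point and tangent $2$-plane whose lift also avoids the Clifford locus, which the paper glosses over). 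The paper does not make this observation; it is the cleaner proof, reducing the theorem to Theorem \ref{thm:prym}, the codimension estimate, and the claim that $Pr$ maps $\cV$ into the smooth locus of $\cP_g$.
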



\section{Holomorphic $1$-forms of moduli space of even spin curves}
\label{SEC:SPIN}

In this section we apply the results of Section \ref{SEC:CoveringOfModuli} to the moduli space $S_g^+$ of even spin curves of genus $g$.
\medskip

We recall that a quasi-stable curves (see \cite{Cor}) is just a nodal semistable curve $X$ such that any two exceptional components never meets. The stable model of a quasi-stable curve $X$ is obtained by contracting all its exceptional components and is denoted by $st(X)$.
\medskip

Let $\cS_g$ be the moduli space of spin curves, the space parametrizing classes of pairs $[(C,\theta)]$ modulo automorphisms where $C$ is a smooth curve of genus $g$ and $\theta$ is a theta-characteristic, i.e. $\theta\in \Pic^{g-1}(C)$ such that $\theta^2=\omega_C$. Furthermore, $\cS_g$ has two disjoint connected components $\cS_g^+$ and $\cS_g^-$ parametrizing even and odd theta-characteristics respectively. The natural forgetful morphism $\pi:\cS_g\to \cM_g$ is an unramified covering of degree $2^{2g}$ that restricts to $\pi_+:=\pi|_{\cS_g^+}$ of degree $2^{g-1}(2^{g}+1)$.

There is a compactification $\Sgbar$ of $S_g$ introduced by \cite{Cor} parametrizing classes of triples $(X,\theta,\beta)$ where $X$ is a quasi-stable curve of genus $g$, $\theta$ is a line bundle on $X$ of total degree $g-1$ with $\theta|_E=\cO_E(1)$ for each exceptional component $E$ of $X$, and $\beta$ is a sheaf homomorphism $\theta^2\to \omega_X$ which is generically non-zero along all non-exceptional irreducible components of $X$.

In \cite{Cor} it is proven that the morphism $\pi$ extends to a ramified morphism $\bar{\pi}: \Sgbar\to \MDM$ which associates to $[(X,\theta,\beta)]$ the class $[st(X)]$ of the stable model of $X$. We concentrate now in the even part $\Sgbar ^+$ and in the restriction map $\bar{\pi}_+:=\bar{\pi}_{\vert \Sgbar^+}$.
It is shown on \cite{Far_even} that the branch locus  of $\bar{\pi}_+$ is the divisor $\Delta_0\subset \MDM$ and the ramification divisor $\beta_0$, which fits the formula 
 $$K_{\Sgbar^+}=\bar{\pi}_+^*(K_{\MDM})+\beta_0,$$
 is the class of the divisor in $\Pic(\Sgpbar)$ whose general point $[(X,\theta,\beta)]$ satisfies the following:
\begin{itemize}
\item $X=C\cup E$ where is $C$ a general smooth curve of genus $g-1$ and $E$ is a $\bP^1$ meeting $C$ transversely in two distinct points $p$ and $q$;
\item $\theta|_{C}$ is an even theta-characteristic on $C$.
\end{itemize}

We are able to prove the following:

\begin{theorem}\label{sgplus}
For $g\geq 5$,  $S_g^{+o}:=\bar{\pi}_+^{-1}(\Mgo)\subset \cS_g$ has no holomorphic $1$-forms, that is  $H^0(S_g^{+o},\Omega^1_{S_g^{+o}})=0$.
\end{theorem}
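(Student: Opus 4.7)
The plan is to deduce Theorem \ref{sgplus} as a direct application of Corollary \ref{COR:BRANCHONDELTA0} to the finite morphism $\bar{\pi}_+: \Sgbar^+\to \MDM$, in complete analogy with the alternative strategy sketched for Prym curves after Theorem \ref{thm:prym}. All the geometric and topological inputs have been prepared in the previous sections, so the argument is essentially a verification of hypotheses.

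First I would check that $\bar{\pi}_+$ fits the framework of Section \ref{SEC:CoveringOfModuli}. By \cite{Cor}, $\Sgpbar$ is a normal projective variety and $\bar{\pi}_+: \Sgpbar\to \MDM$ is a finite morphism; by \cite{Far_even} (as recalled just before the statement), its branch locus is supported on $\Delta_0$. In particular, the branch is disjoint from $\Delta_1$, so the restriction $\pi_+ = \bar{\pi}_+|_{S_g^{+o}}: S_g^{+o}\to \Mgo$ is \'etale and $S_g^{+o}$ is smooth. Therefore $\bar{\pi}_+$ satisfies the hypotheses of Theorem \ref{THM:MAIN}, and part $(b)$ of that theorem already yields that every holomorphic $1$-form on $S_g^{+o}$ is closed.

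Next I would promote $1$-completeness to vanishing. By Lemma \ref{LEM:JOHNSON}, since the branch is supported on $\Delta_0$, the Johnson group $\cK_g$ is contained in the image $G = (\pi_+)_{*}(\pi_1(S_g^{+o}))\subseteq \pi_1(\Mgo)\simeq \Sigma_g$; and $G$ has finite index since $\pi_+$ is a finite \'etale cover. The Boggi--Putman theorem (Theorem \ref{THM:PUTBOG}) then gives $H^1(G,\mathbb{R}) = 0$, which via the universal coefficient identification \eqref{univ_coefficients} in the proof of Proposition \ref{PROP:ETALETORELLI} produces $H^1(S_g^{+o},\mathbb{C}) = 0$.

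Finally, combining these pieces: any $\omega\in H^0(S_g^{+o},\Omega^1_{S_g^{+o}})$ is closed by the above, hence defines a class in $H^1(S_g^{+o},\mathbb{C})=0$, so $\omega$ is exact; by Theorem \ref{THM:MAIN}$(a)$ (applied to $\bar{\pi}_+$), the only global holomorphic functions on $S_g^{+o}$ are constants, so $\omega = 0$. Equivalently, one can invoke Corollary \ref{COR:BRANCHONDELTA0} directly. There is no genuine obstacle to overcome: the only points that require real input are the description of the branch of $\bar{\pi}_+$ (supplied by \cite{Far_even}) and the identification of $\Sgpbar$ as a normal finite cover of $\MDM$ (supplied by \cite{Cor}); everything else is a formal invocation of the machinery developed in Sections \ref{SEC:One} and \ref{SEC:CoveringOfModuli}.
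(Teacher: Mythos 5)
Your argument is correct, but it is not the route the paper actually takes: the printed proof establishes $H^1(S_g^{+o},\bC)=0$ by citing Harer's theorem that $H^1(\cS_g,\bC)=0$, restricting to the even component, and then transferring this to the open part $S_g^{+o}$ via the normality of $\Sgpbar$ (so that $\cS_g^+\setminus S_g^{+o}$ has codimension at least $2$ and $\pi_1$, hence $H^1$, is unchanged); it then concludes with Theorem \ref{THM:MAIN}$(c)$. Your proposal instead derives the same cohomological vanishing internally from the covering machinery: since the branch of $\bar{\pi}_+$ is supported on $\Delta_0$ by \cite{Far_even}, Lemma \ref{LEM:JOHNSON} gives $\cK_g\subseteq G$, Theorem \ref{THM:PUTBOG} gives $H^1(G,\bR)=0$, and the identification \eqref{univ_coefficients} yields $H^1(S_g^{+o},\bC)=0$ --- i.e.\ you are invoking Corollary \ref{COR:BRANCHONDELTA0} directly. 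This is exactly the alternative the authors themselves flag in Remark (a) immediately after the theorem, so both routes are sanctioned by the paper. The trade-off: your version avoids Harer's deep computation of $H^1(\cS_g)$ and the codimension/normality step needed to pass from $\cS_g^+$ to $S_g^{+o}$, at the cost of relying on the Boggi--Putman theorem and the description of the branch locus; the paper's version is shorter given Harer's result but imports it as an external black box. Both correctly verify the hypothesis of Theorem \ref{THM:MAIN} that the branch avoids $\Delta_1$, which is the genuinely geometric input in either case.
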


\begin{proof} Since $\bar{\pi}_+: \Sgbar ^+\to \MDM$ is unramified over $\Delta_1$ (see \cite{Far_even}), in order to prove the theorem it is enough to prove that $H^1(S_g^{+o}, \bC)=0$ so that the assumptions of Theorem \ref{THM:MAIN} are satisfied.

Let us prove that $H^1(S_g^{+o}, \bC)=0$. It is known, by \cite{Har}, that $H^1(\cS_g, \bC)=0$ and so it is $H^1(\cS^+_g, \bC)=0$. But now $\Sgpbar$ is normal by \cite{Cor} and so $\Sing(\cS_g^+)$ has codimension at least $2$. Then, the codimension of $\cS_g^+\setminus S_g^{+o}$ is at most $2$ as well, and we have an isomorphism $\Pi_1(\hat{U})\simeq \Pi_1(\cS_g^+)$ which induces an isomorphism $H^1(S_g^{+o},\bC)\simeq H^1(\cS_g^+,\bC)=0$.
\end{proof}
 
\begin{remark} Some final remarks are in order.
\begin{enumerate}
    \item [a)] In the proof of the Theorem \ref{sgplus} we have used a result of Harer. We could instead use Corollary \ref{COR:BRANCHONDELTA0} to get the same result.
    \item [b)] A similar argument, using this time Corollary \ref{COR:BRANCHONDELTA0}, shows that, for $g\geq 5$, also $\cS^{-o}=\bar{\pi}_+^{-1}(\Mgo)\subset \cS_g^-$ has no holomorphic $1$-forms, that is  $H^0(\cS^{-o},\Omega^1_{\cS^{-o}})=0$. The key point is again that the map $\bar {\pi}_{-}$ is unramified over $\Delta_1$ (see subsection (1.2) in \cite{FV_odd}).
\end{enumerate}
\end{remark}



\begin{bibdiv}
\begin{biblist}

\bib{AG}{article}{
   author={Andreotti, A.},
   author={Grauert, H.},
   title={Th\'{e}or\`eme de finitude pour la cohomologie des espaces complexes},
   language={French},
   journal={Bull. Soc. Math. France},
   volume={90},
   date={1962},
   pages={193--259},
   issn={0037-9484},
}

\bib{ACG2}{book}{
   author={Arbarello, E.},
   author={Cornalba, M.},
   author={Griffiths, P. A.},
   title={Geometry of algebraic curves. Volume II},
   series={Grundlehren der Mathematischen Wissenschaften [Fundamental
   Principles of Mathematical Sciences]},
   volume={268},
   note={With a contribution by Joseph Daniel Harris},
   publisher={Springer, Heidelberg},
   date={2011},
   pages={xxx+963},
   isbn={978-3-540-42688-2},
   doi={10.1007/978-3-540-69392-5},
}

\bib{BCF}{article}{
   author={Ballico, E.},
   author={Casagrande, C.},
   author={Fontanari, C.},
   title={Moduli of Prym curves},
   journal={Doc. Math.},
   volume={9},
   date={2004},
   pages={265--281},
   issn={1431-0635},
}

\bib{Bea}{article}{
   author={Beauville, A.},
   title={Prym varieties and the Schottky problem},
   journal={Invent. Math.},
   volume={41},
   date={1977},
   number={2},
   pages={149--196},
   issn={0020-9910},
   doi={10.1007/BF01418373},
}

\bib{Bir}{article}{
   author={Birman, J. S.},
   title={Mapping class groups and their relationship to braid groups},
   journal={Comm. Pure Appl. Math.},
   volume={22},
   date={1969},
   pages={213--238},
   issn={0010-3640},
   doi={10.1002/cpa.3160220206},
}

\bib{BCFP}{article}{
   author={Biswas, I. F.},
   author={Colombo, E.},
   author={Frediani, P.},
   author={Pirola, G. P.},
   title={A Hodge theoretic projective structure on compact Riemann
   surfaces},
   language={English, with English and French summaries},
   journal={J. Math. Pures Appl. (9)},
   volume={149},
   date={2021},
   pages={1--27},
   issn={0021-7824},
   doi={10.1016/j.matpur.2021.02.005},
}

\bib{BFPT}{article}{
   author={Biswas, I.},
   author={Favale, F.F.},
   author={Pirola, G.P.},
   author={Torelli, S.},
   title={Quillen connection and the uniformization of Riemann surfaces},
   journal={Annali di Matematica Pura ed Applicata},
   volume={},
   date={2022},
   pages={},
   issn={},
   doi={10.1007/s10231-022-01220-y},
}

\bib{Bog}{article}{
  author={Boggi, M.},
  title={Fundamental groups of moduli stacks of stable curves of compact
  type},
  journal={Geom. Topol.},
  volume={13},
  date={2009},
  number={1},
  pages={247--276},
  issn={1465-3060},
  doi={10.2140/gt.2009.13.247},
}

\bib{Cor}{article}{
   author={Cornalba, M.},
   title={Moduli of curves and theta-characteristics},
   conference={
      title={Lectures on Riemann surfaces},
      address={Trieste},
      date={1987},
   },
   book={
      publisher={World Sci. Publ., Teaneck, NJ},
   },
   date={1989},
   pages={560--589}
}


\bib{Far_even}{article}{
   author={Farkas, G.},
   title={The birational type of the moduli space of even spin curves},
   journal={Adv. Math.},
   volume={223},
   date={2010},
   number={2},
   pages={433--443},
   issn={0001-8708},
   doi={10.1016/j.aim.2009.08.011},
}

\bib{FV_odd}{article}{
    author = {Farkas, G.},
    author = {Verra, A.},
     title = {The geometry of the moduli space of odd spin curves},
   journal = {Ann. of Math. (2)},
    volume = {180},
      year = {2014},
    number = {3},
     pages = {927--970},
      issn = {0003-486X},
       doi = {10.4007/annals.2014.180.3.3},
}

\bib{FL}{article}{
   author={Farkas, G.},
   author={Ludwig, K.},
   title={The Kodaira dimension of the moduli space of Prym varieties},
   journal={J. Eur. Math. Soc. (JEMS)},
   volume={12},
   date={2010},
   number={3},
   pages={755--795},
   issn={1435-9855},
   doi={10.4171/JEMS/214},
}


\bib{FPT}{article}{
   author={Favale, F. F.},
   author={Pirola, G. P.},
   author={Torelli, S.},
   title={Holomorphic 1-forms on the moduli space of curves},
   journal={Preprint arXiv:2009.10490v2},
   date={2020},
}



\bib{Hai}{article}{
   author={Hain, R.},
   title={Torelli groups and geometry of moduli spaces of curves},
   conference={
      title={Current topics in complex algebraic geometry},
      address={Berkeley, CA},
      date={1992/93},
   },
   book={
      series={Math. Sci. Res. Inst. Publ.},
      volume={28},
      publisher={Cambridge Univ. Press, Cambridge},
   },
   date={1995},
   pages={97--143},
}

\bib{HL}{article}{
   author={Hain, R.},
   author={Looijenga, E.},
   title={Mapping class groups and moduli spaces of curves},
   conference={
      title={Algebraic geometry---Santa Cruz 1995},
   },
   book={
      series={Proc. Sympos. Pure Math.},
      volume={62},
      publisher={Amer. Math. Soc., Providence, RI},
   },
   date={1997},
   pages={97--142},
}

\bib{HM}{book}{
   author={Harris, J.},
   author={Morrison, I.},
   title={Moduli of curves},
   series={Graduate Texts in Mathematics},
   volume={187},
   publisher={Springer-Verlag, New York},
   date={1998},
   pages={xiv+366},
   isbn={0-387-98438-0},
   isbn={0-387-98429-1},
}

\bib{Har}{article}{
   author={Harer, J.},
   title={The second homology group of the mapping class group of an
   orientable surface},
   journal={Invent. Math.},
   volume={72},
   date={1983},
   number={2},
   pages={221--239},
   issn={0020-9910},
   doi={10.1007/BF01389321},
}

\bib{Iv15}{incollection}{
    author = {Ivanov, Nikolai V.},
     title = {Fifteen problems about the mapping class groups},
 BOOKTITLE = {Problems on mapping class groups and related topics},
    SERIES = {Proc. Sympos. Pure Math.},
    VOLUME = {74},
     PAGES = {71--80},
 PUBLISHER = {Amer. Math. Soc., Providence, RI},
      YEAR = {2006},
}

\bib{Kir}{article}{
   author={Kirby, R.},
   title={Problems in low dimensional manifold theory},
   conference={
      title={Algebraic and geometric topology},
      address={Proc. Sympos. Pure Math., Stanford Univ., Stanford, Calif.},
      date={1976},
   },
   book={
      series={Proc. Sympos. Pure Math., XXXII},
      publisher={Amer. Math. Soc., Providence, R.I.},
   },
   date={1978},
   pages={273--312},
}

\bib{McC}{article}{
   author={McCarthy, J. D.},
   title={On the first cohomology group of cofinite subgroups in surface
   mapping class groups},
   journal={Topology},
   volume={40},
   date={2001},
   number={2},
   pages={401--418},
   issn={0040-9383},
   doi={10.1016/S0040-9383(99)00066-X},
}

\bib{Mum}{article}{
   author={Mumford, D.},
   title={Abelian quotients of the Teichm\"{u}ller modular group},
   journal={J. Analyse Math.},
   volume={18},
   date={1967},
   pages={227--244},
   issn={0021-7670},
   doi={10.1007/BF02798046},
}

\bib{Nar}{article}{
    AUTHOR = {Naranjo, J.C.},
     TITLE = {The positive-dimensional fibres of the {P}rym map},
   JOURNAL = {Pacific J. Math.},
    VOLUME = {172},
      YEAR = {1996},
    NUMBER = {1},
     PAGES = {223--226},
      ISSN = {0030-8730},
}

\bib{Pow}{article}{
   author={Powell, J.},
   title={Two theorems on the mapping class group of a surface},
   journal={Proc. Amer. Math. Soc.},
   volume={68},
   date={1978},
   number={3},
   pages={347--350},
   issn={0002-9939},
   review={\MR{494115}},
   doi={10.2307/2043120},
}

\bib{Put}{article}{
   author={Putman, A.},
   title={A note on the abelianizations of finite-index subgroups of the
   mapping class group},
   journal={Proc. Amer. Math. Soc.},
   volume={138},
   date={2010},
   number={2},
   pages={753--758},
   issn={0002-9939},
   review={\MR{2557192}},
   doi={10.1090/S0002-9939-09-10124-7},
}

\bib{Sat}{article}{
   author={Satake, I.},
   title={On the compactification of the Siegel space},
   journal={J. Indian Math. Soc. (N.S.)},
   volume={20},
   date={1956},
   pages={259--281},
   issn={0019-5839},
}


\end{biblist}
\end{bibdiv}

\end{document}